\numberwithin{equation}{section}
\newtheorem{lemma}{Lemma}[section]
\newtheorem{theorem}{Theorem}[section]
\theoremstyle{definition}
\newtheorem{assumption}{Assumption}[section]
\newtheorem{remark}{Remark}[section]
\newtheorem{example}{Example}[section]
\newlength{\defbaselineskip}
\newcommand{\setlinespacing}[1]%
           {\setlength{\baselineskip}{#1 \defbaselineskip}}
\newcommand{\RR}{{\mathbb R}}
\newcommand{\ZZ}{{\mathbb Z}}
\newcommand{\NN}{{\mathbb N}}
\newcommand{\ra}{\rightarrow}
\newcommand{\beql}[1]{\begin{equation}\label{#1}}
\newcommand{\eeq}{\end{equation}}
\newcommand{\beqal}[1]{\begin{eqnarray}\label{#1}}
\newcommand{\eeqa}{\end{eqnarray}}
\newcommand{\beq}{\begin{displaymath}}
\newcommand{\eeqno}{\end{displaymath}}
\newcommand{\bali}[1]{\begin{align}\label{#1}}
\newcommand{\eali}{\begin{align}}
\newcommand{\balino}{\begin{align*}}
\newcommand{\ealino}{\begin{align*}}
\newcommand{\qandq}{\quad\mbox{and}\quad}
\newcommand{\qforq}{\quad\mbox{for}\quad}
\newcommand{\qasq}{\quad\mbox{as}\quad}
\newcommand{\qforallq}{\quad\mbox{for all}\quad}
\newcommand{\baa}{\begin{eqnarray*}}
\newcommand{\eaa}{\end{eqnarray*}}
\DeclareMathOperator{\tr}{tr}
\def\wtZ{\widetilde Z}
\let\oldtocsection=\tocsection
\let\oldtocsubsection=\tocsubsection
\let\oldtocsubsubsection=\tocsubsubsection
\renewcommand{\tocsection}[2]{\hspace{0em}\oldtocsection{#1}{#2}}
\renewcommand{\tocsubsection}[2]{\hspace{1em}\oldtocsubsection{#1}{#2}}
\renewcommand{\tocsubsubsection}[2]{\hspace{2em}\oldtocsubsubsection{#1}{#2}}
\def\btau{{\mbox{\boldmath$\tau$}}}
\def\eps{\epsilon}
\def\beq{\begin{equation}}
\def\eeq{\end{equation}}
\def\beq{\begin{equation}}
\def\eeq{\end{equation}}
\newcommand{\ttl}{\Large Birth and Death Processes\\[5pt] in Interactive Random Environments}
\newcommand{\ttls}{\large Birth and Death Processes in Interactive Random Environments}
\begin{document}

\title[\ttls]{\ttl}

\author{Guodong Pang}
\address{Department of Computational Applied Mathematics and Operations Research,
George R. Brown College of Engineering,
Rice University, Houston, TX}
\email{gdpang@rice.edu}

\author{Andrey Sarantsev}
\address{Department of Mathematics and Statistics, University of Nevada, Reno, NV}
\email{asarantsev@unr.edu}

\author{Yuri Suhov}
\address{
Department of Pure Mathematics and Mathematical Statistics, University of Cambridge; Department of Mathematics, Pennsylvania State University}
\email{yms@statslab.cam.ac.uk; ims14@psu.edu}

\def\cre{\color{red}} \def\cbr{\color{brown}}

\begin{abstract} 
This paper studies birth and death processes in interactive random environments where the birth and death rates and the dynamics 
of the state of the environment are dependent on each other. Two models of a random environment are considered:
a continuous-time Markov chain (finite or countably infinite) and a reflected (jump) diffusion process. The background is determined by a  joint Markov process carrying a specific interactive mechanism, with an explicit invariant measure whose structure is similar to a product form. We discuss a number of queueing and population-growth models and establish conditions under which the above-mentioned invariant measure 
can be derived.  

Next, an analysis of the rate of convergence to stationarity is performed for the models under consideration. We consider two settings leading to either an exponential or a polynomial convergence rate. In both cases we assume that the underlying environmental 
Markov process has an 
exponential rate of convergence, but the convergence rate of the joint Markov process is determined by certain conditions on the 
birth and death rates. To prove these results a coupling method turns out to be useful. 
\end{abstract}

\keywords{birth-death processes, interactive random environment, Markov jump process,  (jump) diffusion process, invariant measures, product-form formula, exponential/polynomial convergence rate to stationarity}

\maketitle

\begin{center}
{\it Contribution to the QUESTA Special Issue in the honor of \\ Professor Masakiyo Miyazawa's 75th Birthday} 
\end{center}

\thispagestyle{empty}

\allowdisplaybreaks

\section{Introduction} \label{sec-intro}

Birth-death processes are fundamental stochastic models in applied probability,  particularly in queueing. 
Birth-death processes in random environments have been extensively studied \cite{torrez1978birth,torrez1979calculating,cogburn1981birth,cornez1987birth} and used in various applications (e.g., 
in queueing \cite{krenzler2015loss,krenzler2016jackson,gannon2016random}, inventory \cite{otten2020queues},  population
dynamics  \cite{bacaer2014linear} and epidemiology \cite{prodhomme2021large}). Most of the studies have been
 focused on models where the transitions of birth and death processes are affected by the environment   but
not {\it vice versa} (see, e.g., \cite{torrez1978birth,torrez1979calculating,cogburn1981birth}). 

However, in applications, the influence of the birth-death processes and the environment can be in both directions.  For example,  
service systems can be often modeled as multi-server queues where customer arrivals may depend on performance rating. 
The joint ({\it queue, rating}) dynamics may be modeled as a Markov process where ratings depend on the service quality indicated by congestion (i.e., the size of the queue). The  
population growth can also be modeled as a birth-death process in a random environment where the environmental changes are
influenced by the population size (say, in the case of an overpopulation).  In epidemiology,  the infection rate may depend on a 
moving environment, while the dynamics of the environmental state is determined by the number of infected individuals (say, via a 
specific vaccination or lockdown intervention policy).

In this paper, we study birth--death processes in an interactive random environment evolving in such a way that 
the joint Markov process has a product-form type invariant measure. Consider a birth-death process 
$N = \{N(t),\, t \ge 0\}$ on $\NN=\{0, 1, 2, \ldots\}$ with birth rates $\lambda_n(z)$ (for the jump $n\to n+1$, $n\geq 0$)
and death rates $\mu_n(z)$ (for the jump $n\to n-1$, $n\geq 1$) 
depending on an {\it environment variable} $z$ taking values in an {\it environment space} $\mathcal Z$.
 The environmental variable, in turn, 
evolves as a continuous-time Markov process $Z = \{Z(t),\, t \ge 0\}$, with transition functions depending
on the current state $N(t)$ of  the birth-death process.
\footnote{Formally speaking, we deal with a {\it family}
of birth-death processes depending on the parameter $z\in{\mathcal Z}$ and a {\it family} of environmental processes 
depending on the parameter $n\in{\mathbb N}$.} A combined Markov process 
$(N, Z) = \{(N(t), Z(t)),\, t \ge 0\}$, with states $(n,z)$, is called a {\it birth-death process in an interactive 
random environment}. 

The generator  $\mathcal L$ of the combined process $(N,Z)$ is given by 
\begin{equation}
\label{eq:generator}
{\mathcal L}f(n, z) = {\mathcal M}_zf(n, z) +  r_n(z)^{-1}{\mathcal A}_nf(n, z).
\end{equation}
Here $\mathcal M_z$ is the generator of the birth-death process with a fixed environment variable $z$ and $\mathcal A_n$ is  
the generator of the environment process for a fixed birth-death value $n$. 
The parameter $r_n(z)$ is the  cumulative (product) birth-death rate ratio given in  \eqref{eqn-pi-n-BD}. 
See further discussions on the joint generator in Remark \ref{rem-generator}.

For the  process $Z$, we consider two  models: 
\begin{itemize}
\item[(i)] The environment space $\mathcal Z$ is   at most countable, and the environment process is a 
continuous-time Markov chain with a generating matrix  ${\mathcal A}_n$ depending on $n$, the state of 
 process $N(t)$; 
we call this model a {\it jump environment}. 
\item[(ii)] The environment space $\mathcal Z$ is a domain in $\mathbb R^d$, and the environment process is a 
 reflected jump diffusion in this domain, with  variable drift vector  $b_n(z)$, diffusion matrix  $\sigma_n(z)$
and jump measures $\varpi_n(z,\;\;)$ 
dependent on the state $n$ of process $N(t)$;  this is called a {\it diffusive environment}.\footnote{The domain 
and the reflection type may also depend on $n$; a general type of dependence is encrypted in the symbol ${\mathcal A}_n$
for the generator of the environmental diffusion.}   
\end{itemize}

In this article, we study the long-time behavior of these models: 

\begin{itemize}
\item[(a)]  existence and uniqueness of a stationary distribution, that is, a probability distribution $\pi$ on the product space 
$\mathcal X = \NN\times \mathcal{Z}$ such that if $(N(0), Z(0)) \sim \pi$, then $(N(t), Z(t)) \sim \pi$ for all $t \ge 0$, and 
an explicit form of $\pi$; 
\item[(b)]  convergence $(N(t), Z(t)) \to \pi$ as $t \to \infty$ in the total variation distance, and the rate of this 
convergence (exponential or polynomial). 
\end{itemize}

We adapt and generalize the methods of our previous article \cite{PSBS} devoted to M/M/1 queues in an interactive random 
environment. We identify conditions on the birth and death rates and the underlying Markov process under which the rate of 
convergence can be either exponential or polynomial (Theorems \ref{thm-RC-exp}, \ref{thm-RC-exp2} and \ref{thm:polynomial}). 

\medskip

We discuss a few examples that are of interest on their own. For example, we have studied infinite-server queues with the arrival and/or service rates being an RBM or reflected Ornstein--Ulenbeck diffusion (Examples \ref{exm:RBM-arrival}--\ref{example-MMinfty-compact}). We have also discussed the finite-server queues (infinite-waiting space, blocking/loss model or with abandonment) where the arrival, service and/or abandonment rates are an RBM or reflected diffusion in Examples \ref{example-MMK-diff} and \ref{example-MMK+M-diff}. Another example is the population growth model in biology with the growth and death rates dependent on the environment (see Example \ref{example-population} in a jump environment and Example \ref{example-population-diff} with the rates being a three-dimensional RBM in an orthant). 
We have also briefly discussed how the population growth model can be extended to study growth stocks in finance in Examples \ref{example-growthstock}  and  \ref{example-population-diff}. In all these models, we discuss how the conditions for the existence of stationary distributions are verified and provide the explicit expressions for the invariant measures. 

\subsection{Literature review}
Birth-death processes in random environments have been widely studied, see, e.g., \cite{torrez1978birth,torrez1979calculating,cogburn1981birth}. In these models, the birth and death rates are affected by the environments.  Economou \cite{economou2005generalized} studied continuous-time Markov chains (CTMC) in random environments where not only the the transitions rates of the CTMC depend on the environment, but also a change in the environment can trigger an immediate transition of the CTMC. He identified conditions under which a (generalized) product form stationary distribution may exist. In \cite{cogburn1980markov}, more general Markov chains in random environments are studied, where the transition probabilities of the Markov chains are affected by the environments. Baca{\"e}r and Ed-Darraz \cite{bacaer2014linear} studied a linear birth-death process in a finite-state random environment with biology applications, and derived the probability of extinction. 
In all these studies, the interaction with the environment is one-sided, that is, the dynamics of the environment is not influenced by the state of the Markov chains. 

Cornez \cite{cornez1987birth} first studied birth-death processes in random environment with feedback (that is, feedback to the environment process from the state of the birth-death process) and provided sufficient conditions under which the birth-death process component goes extinct or not. In that model, the environment process takes values in a general measurable space, and no explicit stationary distribution is derived. 
In \cite{krenzler2015loss}, loss queues with interactive (Markov jump) random environments are considered, and a product-form steady state distribution of the joint queueing-environment process is derived which results in a strong insensitivity property. 
In \cite{krenzler2016jackson}, the authors consider Jackson networks with interactive (Markov jump) random environments, where customers departing from the network may enforce the environment to jump immediately. 
In \cite{otten2020queues}, single server queues with state dependent arrival and service rates which are also interactively affected by a Markov jump environment are studied, and both cases of an explicit product-form (separable) steady state distribution and of a non-separable steady-state distribution 
are considered. In \cite{das2016constructions}, another construction is provided for  Markov processes in interactive random environments (pure Markov jump process) that allows simultaneous transitions for the Markov chain and environment states, for which a product form invariant measure is derived and applications to queueing and neural avalanches are discussed.
We note the main differences in the construction of the joint Markov process in \cite{krenzler2015loss,krenzler2016jackson,otten2020queues} 
from our paper: they allow simultaneous changes in the queueing and environment states, while our construction does not. 
Moreover, the environments in those papers are only a Markov jump process. 

We also refer to \cite{gannon2016random}, where a random walk interacting with a random environment of a Jackson/Gordon-Newell network 
is considered, and an explicit stationary distribution of a product-form type is derived.  In \cite{prodhomme2021large}, an epidemic SIS model 
in an interactive switching environment is studied, where the infection and recovery rates depend on a finite-state Markov jump process whose 
transitions also depend on the number of infectives.  Large population scaling limits and the associated long-time behaviors are studied. 

Our work generalizes the previous work in \cite{PSBS}, where an M/M/1 queue in an interactive random environment is studied, with both 
jump and diffusive environments. 
The models considered in this paper are more general, and a few new stochastic models are introduced as discussed above.
In addition to exponential rate of convergence, we also establish polynomial rate of convergence to stationarity. 

This paper also contributes to the understanding of rate of convergence of birth-death processes. 
Lindvall \cite{lindvall1979note} developed the coupling approach to estimate the exponential rate of convergence for birth-death 
processes, which we follow and further develop for our model. 
Van Doorn \cite{van1985conditions} identified conditions on the birth and death rates under which the chain is exponentially ergodic 
by investigating the spectral representation of the transition probabilities, and bounds on the decay parameter were also established. 
 Zeifman \cite{zeifman1991some}  used  methods  of differential equations to derive explicit
estimates for the rate of convergence for birth-death processes; this was  subsequently applied to some queueing 
examples, including the 
M/M/$K$ and M/M/$K/0$ models.  In  \cite{zeifman1995upper}, this approach has been
extended to nonhomogeneous birth-death 
processes for which upper and lower bounds on the rate of convergence were derived;  consequently, a number of  
queueing examples have been studied, including M$_t$/M$_t$/$K$ and M$_t$/M$_t$/$K$/0. 
Van Doorn and Zeifman \cite{van2009speed} study the rate of convergence of the Erlang loss system. 
We also refer to 
\cite{van2010bounds, van2011rate,zeifman2017convergence} for further studies on the related topics and queueing models. 
 
\subsection{Organization of the article} 
In \textsc{Section}~\ref{sec-jump}, we state the model and results for the jump environment. 
In \textsc{Section}~\ref{sec-diffusive}, we do the same for the diffusive environment. 
A few examples are provided in both sections. 
In \textsc{Section}~\ref{sec-conv-rate}, we state and prove the results on the exponential rates of convergence to stationarity. 
In \textsc{Section}~\ref{sec-1/t}, we do the same for polynomial convergence. \textsc{Appendix} A contains proofs of results from \textsc{Sections}~\ref{sec-jump} and~\ref{sec-diffusive}. \textsc{Appendix} B provides a technical comparison lemma from \cite{PSBS}.

\subsection{Notation} We let $\mathbb N = \{0, 1, 2, \ldots\}$ and $\mathbb R_+ := [0, \infty)$ be the sets of all nonnegative integer and 
real numbers, respectively. Let  $\mathbb{R}^d$ be the space of $d$-dimensional real numbers. 
The {\it total variation} distance between two probability measures $P$ and $Q$ on the same space $\mathcal E$ 
is defined as 
\begin{equation}\label{eq:TVD}\|P - Q\|_{\mathrm{TV}} = \sup_{A \subseteq \mathcal E}|P(A) - Q(A)|\,.\end{equation}
 The space of twice continuously 
differentiable functions on the space $\mathcal E$ is denoted by $C^2(\mathcal E)$.
 The space of bounded twice continuously 
differentiable functions with bounded first and second derivatives on the space $\mathcal E$ is denoted by $C^2_b(\mathcal E)$. 

\section{Jump Environment} \label{sec-jump} 

\subsection{Model construction} Consider a birth-death process in an interactive jump environment described as follows. 
Let $\mathcal{Z}$ be a finite or countable state space for the environment.
We define a two-component Markov process $(N, Z)$ taking values in the countable state space 
$\NN\times \mathcal{Z}$, with the following generator matrix 
$\mathbf{R} = \big( R[(n,z), (n',z')]\big)$: 
\begin{equation} \label{eqn-R-BD}
\begin{split}
& R[(n,z), (n+1,z)] = \lambda_n(z), \quad R[(n,z), (n-1,z)] =  \mu_n(z), \\
& R[(n,z), (n,z')] = r_n(z)^{-1}  \tau_n(z,z') \\
& R[(n,z), (n',z')] = 0, \quad n\neq n', \quad z\neq z', 
\end{split}
\end{equation}
where  for each $z \in \mathcal{Z}$,  
\begin{equation}\label{eqn-pi-n-BD} 
r_n(z)  =  \prod_{k=1}^{n}\frac{\lambda_{k-1}(z)}{\mu_k(z)}\, \qforq n \ge 1, \qandq r_0(z) \equiv 1,  
\end{equation}
and 
$\mathbf{T}_n = (\tau_n(z,z'))_{z,z'\in \mathcal{Z}} $ is the generator for an irreducible continuous-time Markov chain on $\mathcal{Z}$ (see e.g., \cite{meyn1993stabilityII}). Here $N = \{N(t): t \ge 0\}$ represents the dynamics of the birth-death process, taking values in $\NN$, and $Z = \{Z(t): t\ge 0\}$  indicates the evolution of the environment. 
It can be easily checked that for each $z \in \mathcal Z$, we have the detailed balance equations for the birth-death process $N(t)$:
\begin{align} 
 & \kappa_n(z)  \big( \lambda_n(z) + \mu_n(z) \big) = \kappa_{n-1}(z) \lambda_{n-1}(z) +  \kappa_{n+1}(z)  \mu_{n+1}(z), \quad n \ge 1,\label{eqn-N-balance-n}  \\
 & \kappa_0(z) \lambda_0(z)=\kappa_1(z)\mu_1(z),  \label{eqn-N-balance-0}
 \end{align}
 where
 \begin{align}
\label{eqn-pi-0-BD}
\begin{split} 
 \kappa_n(z) & =  \kappa_0(z) \prod_{k=1}^{n}\frac{\lambda_{k-1}(z)}{\mu_k(z)}\,,\quad n \ge 1\,, 
\\
\kappa_0(z) & = \left[1 + \sum_{j=1}^\infty \prod_{k=1}^j \frac{\lambda_{k-1}(z)}{\mu_k(z)}\right]^{-1}\,.  
\end{split}
\end{align}
For the quantity $ \kappa_0(z) $ to be well defined, we make the following assumption.
\begin{assumption}\label{as-r0}
For each $z \in  \mathcal{Z}$,
 $\lambda_n(z)$ and $\mu_n(z)$ are positive such that 
\begin{align*}
\sum_{j=1}^\infty  r_j(z) = \sum_{j=1}^\infty \prod_{k=1}^j \frac{\lambda_{k-1}(z)}{\mu_k(z)} < \infty. 
\end{align*}
\end{assumption}
Observe that the detailed balance equations in \eqref{eqn-N-balance-n} and \eqref{eqn-N-balance-0} also hold by replacing 
$\kappa_n(z)$ with $r_n(z)$. This is not surprising: $\kappa_n$ are a normalized $r_n$ with $\sum_{n=0}^\infty \kappa_n=1$. 
When the environment is in state $z$, the birth rate of $N(t)$ in the state $n$ is $\lambda_n(z)$ while the death rate  is $\mu_n(z)$. When the birth-death process is in state $n$, the transition of the environment from state $z$ to  $z'$ occurs at the rate $\tau_n(z,z')/r_n(z)$. Note that the last equation in \eqref{eqn-R-BD} forbids simultaneous jumps for  $N$ and $Z$. 
The pair $(N,Z)$ yields a  Markov process in the state space $\NN\times \mathcal{Z}$ with the generator $\mathbf{R}$. We denote its transition kernel by $P^t((n,z), \cdot)$. 

\subsection{Main results on the stationary distribution} We make the following assumption on the matrix $\mathbf{T}_n$. 

\begin{assumption}  \label{as-Tn}
There exists a function $v: \mathcal{Z} \ra \RR_+$ satisfying 
\beql{as-T-v}
v(z)\sum_{z'\in \mathcal{Z}} \tau_n(z,z') = \sum_{z'\in \mathcal{Z}} v(z') \tau_n(z',z)\,, \quad \mbox{for all}\quad n\in \NN,\, z \in \mathcal{Z};
\eeq
and 
\beql{XiFinite}
\Xi :=  \sum_{(n,z)} r_n(z) v(z) = \sum_{(n,z)}\prod_{k=1}^{n}\frac{\lambda_{k-1}(z)}{\mu_k(z)}  v(z) <\infty.
\eeq
\end{assumption}

\begin{theorem} \label{thm-BD}
Under Assumptions \ref{as-r0} and \ref{as-Tn}, the Markov process $(N,Z)$ $\NN\times \mathcal{Z}$  is irreducible, aperiodic, and positive recurrent. It  has a unique invariant probability measure
\beql{eqn-BD-pi}
\pi(n,z) := \eta(n,z)/\Xi,  \quad \forall (n,z) \in \NN\times \mathcal{Z},
\eeq
with 
\beql{eqn-BD-eta}
\eta(n,z) := r_n(z) v(z) , \quad \forall (n,z) \in \NN\times \mathcal{Z},
\eeq
The transition kernel converges to this invariant measure in the total variation distance:
\begin{equation}
\label{eq:ergodic-discrete}
\|P^t((n,z), \cdot) - \pi(\cdot)\|_{\mathrm{TV}} \to 0 \qasq  t \to \infty, \qforallq (n,z) \in \NN\times \mathcal{Z}.
\end{equation}
\end{theorem}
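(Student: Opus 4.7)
The plan is to verify that the candidate measure $\eta(n,z) = r_n(z)\,v(z)$ satisfies the global balance equations $\eta\mathbf{R}=0$, and then to invoke standard CTMC theory on a countable state space to get uniqueness, positive recurrence and total variation convergence.

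First I would write out, for a fixed target state $(n,z)\in\NN\times\mathcal{Z}$, the balance equation
\[
\sum_{(n',z')\neq (n,z)} \eta(n',z')\,R[(n',z'),(n,z)] \;=\; \eta(n,z)\sum_{(n',z')\neq (n,z)} R[(n,z),(n',z')],
\]
and split the contributions into the birth--death piece (moves $n'\in\{n-1,n+1\}$, $z'=z$) and the environment piece (moves $n'=n$, $z'\neq z$), using the fact that \eqref{eqn-R-BD} forbids simultaneous jumps. For the birth--death piece, substituting $\eta(n\pm 1,z)=r_{n\pm 1}(z)v(z)$ and factoring out $v(z)$ reduces the identity to the detailed balance relations \eqref{eqn-N-balance-n}--\eqref{eqn-N-balance-0} with $\kappa_n(z)$ replaced by $r_n(z)$, which the paper already observes to be valid. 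For the environment piece, the key cancellation is that $\eta(n,z')\,R[(n,z'),(n,z)] = r_n(z')v(z') \cdot r_n(z')^{-1}\tau_n(z',z) = v(z')\tau_n(z',z)$, so the $r_n$ factor disappears and the piece reduces to
\[
\sum_{z'\neq z} v(z')\,\tau_n(z',z) \;=\; v(z)\sum_{z'\neq z}\tau_n(z,z'),
\]
which is exactly Assumption~\ref{as-Tn}, equation \eqref{as-T-v}, once one uses the fact that generator rows of $\mathbf{T}_n$ sum to zero (so the $z'=z$ terms on both sides of \eqref{as-T-v} coincide and may be dropped).

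Once invariance of $\eta$ is established, normalization by $\Xi = \sum_{(n,z)} r_n(z)v(z) <\infty$ from \eqref{XiFinite} makes $\pi=\eta/\Xi$ a probability measure. Irreducibility of $(N,Z)$ on $\NN\times\mathcal{Z}$ follows because Assumption~\ref{as-r0} ensures $\lambda_n(z),\mu_n(z)>0$ (so the $N$-component communicates between adjacent levels in any fixed environment), while the irreducibility of each $\mathbf{T}_n$, together with $r_n(z)^{-1}>0$, allows the $Z$-component to move between any two states at a fixed $n$. Aperiodicity holds automatically for a continuous-time Markov chain whose jump rates are all positive. Existence of an invariant probability measure on a countable state space for an irreducible CTMC then yields positive recurrence and uniqueness of $\pi$ (see, e.g., Meyn and Tweedie).

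Finally, the total variation convergence \eqref{eq:ergodic-discrete} is the classical ergodic theorem for irreducible, aperiodic, positive recurrent CTMCs on a countable state space; no additional work is required. The main potential obstacle is purely bookkeeping: keeping the birth--death detailed balance and the environment balance properly separated, and using the $r_n(z)^{-1}$ weighting in the environment rates in \eqref{eqn-R-BD} to cancel the $r_n(z)$ factor in $\eta$ so that the environment balance reduces exactly to Assumption~\ref{as-Tn}. No deep arguments are needed beyond this algebraic verification and the standard countable-state ergodic theory.
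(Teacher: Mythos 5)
Your proposal is correct and follows essentially the same route as the paper: you verify the global balance equations $\eta\mathbf{R}=0$ by splitting into the birth--death transitions (reduced to the detailed balance relations with $r_n$ in place of $\kappa_n$) and the environment transitions (where the $r_n(z')^{-1}$ rate weighting cancels the $r_n(z')$ factor in $\eta$, reducing the identity to \eqref{as-T-v}), then normalize by $\Xi$ and invoke standard countable-state CTMC ergodic theory for uniqueness, positive recurrence, and total variation convergence. The only small remark is that your appeal to the row-sum-zero property of $\mathbf{T}_n$ when dropping the $z'=z$ terms in \eqref{as-T-v} is superfluous, since those terms are literally $v(z)\tau_n(z,z)$ on both sides and cancel directly.
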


\begin{remark}
Observe that the function $v$ in \eqref{as-T-v} is independent of $n$, which is crucial to the product-form of the invariant measure $\pi$.
The condition in \eqref{as-T-v} will hold if $\tau_n(z,z') $ takes the form $\tau_n(z,z') = \beta_n \tau(z, z')$ for some constant $\beta_n>0$. However, one can construct examples of $\tau_n(z,z')$ of more complicated forms that still guarantee the existence of a function $v$ satisfying \eqref{as-T-v} (see, e.g., Examples 2.1 and 2.2 in \cite{PSBS}). 
We also refer to Section \ref{sec-diff-gen-n} for discussions on the diffusive setting. 
(Constructions similar to those from Section \ref{sec-diff-gen-n} can be done in the discrete setting too.) 
\hfill $\Box$
\end{remark}

\subsection{Examples} 
We start with classic queues: M/M/1, M/M/$\infty$, M/M/$K$, M/M/$K/0$.  
In Examples \ref{MM1}-\ref{example-MMK+M}, the 
parameter $\lambda (z)$ takes nonnegative values while $\mu (z)$ and $\gamma (z)$ are strictly positive: $0\leq\lambda (z)<\infty$ 
and $0<\mu (z),\gamma (z)<\infty$. In all examples in this section, as long as condition \eqref{XiFinite} is fulfilled, there exists a finite 
invariant measure $\eta$ for process $(N,Z)$
on ${\mathbb N}\times{\mathcal Z}$. The unique invatriant probability distribution $\pi (n,z)$ is then obtained by normalization.

\begin{example}\label{MM1}
The M/M/1 queue was already studied in \cite{PSBS}: here $\lambda_n(z) = \lambda(z)$ and $\mu_n(z) = \mu(z)$ for $n\ge 0$ and $z \in \mathcal{Z}$. Hence,
$r_n(z) =  \rho(z)^n$  where $ \rho(z) := \lambda(z)/\mu(z)$ is the traffic intensity satisfying $\rho(z)\in (0,1)$  for all $z\in \mathcal{Z}$. Also, $\kappa_0(z) =  1-\rho(z)$, and 
$$
\kappa_n(z) = \kappa_0(z) \rho(z)^n =(  1-\rho(z)) \rho(z)^n,\quad n\ge 1, \quad z \in \mathcal{Z}.
$$
Condition \eqref{XiFinite} means that, as in \cite[Assumption 2.2]{PSBS},
$$
\Xi =\sum_{(n,z)} \rho(z)^n v(z) = \sum_z \frac{v(z)}{1-\rho(z)}<\infty. 
$$
Then  $\eta(n,z) = \rho(z)^n v(z)$ yields a finite invariant measure for process $(N,Z)$.
\end{example}

\begin{example} For an M/M/$\infty$ queue, $\lambda_n(z) = \lambda(z)$ and $\mu_n(z) = n \mu(z)$ for all $n$ and $z\in \mathcal{Z}$. Hence, 
$$
r_n(z) = \frac{ \rho(z)^n}{n!}, \quad \mbox{where}\quad \rho(z) := \frac{\lambda(z)}{\mu(z)} \in (0,\infty)
$$
is the offered load. The sum of all these $r_n(z)$ is $\sum_{n=0}^\infty r_n(z) = e^{\rho(z)}$. Thus, after normalizing,
we get
$$
\kappa_0(z) = e^{-\rho(z)},,\qandq \kappa_n(z) = e^{-\rho(z)} \frac{\rho(z)^n}{n!}\,,\quad n\ge 1, \quad z \in \mathcal{Z}. 
$$
Condition \eqref{XiFinite} takes the following form
$$
\Xi= \sum_{(n,z)} \frac{\rho(z)^n}{n!} v(z) = \sum_z e^{\rho(z)} v(z)<\infty.
$$ Consequently, $\eta(n,z) =\rho(z)^nv(z)/n!$ yields a finite invariant measure for $(N,Z)$, 
and $\pi (n,z)=\eta(n,z)/\Xi$ is a unique invariant probability distribution.
\end{example}

\begin{example} \label{example-MMK-jump}
Let us fix an integer $K\geq 1$. 
For an M/M/$K$ queue, $\lambda_n(z) = \lambda(z)$ and $\mu_n(z) = \mu(z) (n\wedge K)$ for all $n\ge 0$ and $z\in \mathcal{Z}$.
 Hence, 
\begin{equation}\label{eqn-rn-MMK}
r_n(z) = \begin{cases} 
\frac{\rho(z)^{n}}{n!}\,, & \qforq n <K, \\
\frac{\rho(z)^n }{K!K^{n-K}}\,, & \qforq n \ge K, 
 \end{cases}
\end{equation} 
where $\rho(z) = \lambda(z)/\mu(z)$ is the offered load, and $\varrho(z) = \rho(z)/K$ is the traffic intensity. Also,  
\begin{align*}
\kappa_0(z) &=  \left( \sum_{n=0}^{K-1} \frac{\rho(z)^n}{n!} + \frac{\rho(z)^K}{K!} \frac{1}{1- \rho(z)}\right)^{-1}; \\
\kappa_n(z) &= \begin{cases}
\kappa_0(z) \frac{(\rho(z))^n}{n!}, \quad & n <K; \\
\kappa_0(z) \frac{(\rho(z))^n }{K! K^{n-K}}, \quad & n\ge  K.
\end{cases}
\end{align*}
We assume that the traffic intensity $\varrho(z)= \rho(z)/K<1$ for all $z$. 
Condition \eqref{XiFinite} now means that 
$$
\Xi = \sum_{(n,z)} r_n(z) v(z) = \sum_z \left(\sum_{n=0}^{K-1}\frac{\rho(z)^{n}}{n!} +  \sum_{n=K}^\infty \frac{\rho(z)^n }{K!K^{n-K}} \right) v(z)<\infty.
$$
Then the formula
$$
\eta(n,z) = \begin{cases} 
\frac{\rho(z)^{n}}{n!} v(z)\,, & \qforq n <K, \\
\frac{\rho(z)^n }{K!K^{n-K}} v(z)\,, & \qforq n \ge K.
 \end{cases}
$$
gives a finite invariant measure for $(N,Z)$.

It is clear that the M/M/1 model in \cite{PSBS} is a special case of this model. 
\end{example}

\begin{example} \label{example-MMK0}
For an M/M/$K$/$0$ queue,  $\lambda_n(z) = \lambda(z){\mathbf 1}(0\leq n< K)$, and $\mu_n(z) = n\mu(z)$ for $0 \le n \le K$ and $z\in \mathcal{Z}$.  (The form of rates $\mu_n(z)$ for $>K$ is chosen for convenience.)
Hence, $r_n(z)= \rho(z)^{n}/n!$ where $\rho(z) 
= \lambda(z)/\mu(z)$, $n=0,\ldots ,K$, $z\in \mathcal{Z}$. Next, 
$$\kappa_0(z)= \left( \sum_{n=0}^{K} \frac{\rho(z)^n}{n!} \right)^{-1},\quad\hbox{and}\quad 
\kappa_n(z)= \kappa_0(z) \frac{(\rho(z))^n}{n!},\quad n = 1, \ldots, K.$$
Condition \eqref{XiFinite} is written as
$$
\Xi = \sum_z \left(\sum_{n=0}^{K}\frac{\rho(z)^{n}}{n!}  \right) v(z)<\infty.
$$
A finite invariant measure  for $(N,Z)$ is given by $\eta(n,z) = \rho(z)^{n}v(z){\mathbf 1}(0\leq n\leq K)/n!$.

 A similar construction works for an M/M/$K/l$ model, where $\lambda_n(z)=\lambda (z){\mathbf 1}(0\leq n\leq K+\ell)$
and $\mu_n(z)=(n\wedge K)\mu(z)$. 
\end{example}

\begin{example} \label{example-MMK+M}
For an M/M/$K$+M queue, $\lambda_n(z) = \lambda(z)$, $\mu_n(z) = \mu(z) (n \wedge K) + \gamma(z) (n- K)^+$ for 
$n\ge 0$ and $z \in \mathcal{Z}$. Here,  the rates $\lambda(z), \mu(z)$ and $\gamma(z)$ represent the arrival, service and abandonment 
rates, respectively. Hence,
\begin{equation}\label{eqn-rn-MMK+M}
r_n(z)= \begin{cases} 
\frac{\rho(z)^{n}}{n!}, & \qforq n <K, \\
\frac{\rho(z)^K \beta(z)^{n-K} }{ K! (n-K)!}, & \qforq n \ge K, 
\end{cases}
\end{equation} 
where $\rho(z) =\lambda(z)/\mu(z)$ is the offered load, $\varrho(z)= \rho(z)/K$ is the traffic intensity, and $\beta(z) = \lambda(z)/\gamma(z)$.
For this model,  the traffic intensity $\varrho(z)$ is allowed to take any positive value, less than 1 (underloaded), equal to 1 (critically loaded) or larger than 1 (overloaded). 
In this model we have
\begin{align*}
\kappa_0(z) &=  \left( \sum_{n=0}^{K-1} \frac{\rho(z)^n}{n!} + \frac{\rho(z)^K}{K!} e^{\beta(z) }\right)^{-1}; \\
\kappa_n(z) &= \begin{cases}
\kappa_0(z) \frac{(\rho(z))^n}{n!}, \quad & n <K; \\
\kappa_0(z) \frac{\rho(z)^K \beta(z)^{n-K} }{ K! (n-K)!}, \quad & n\ge  K.
\end{cases}
\end{align*}
Condition \eqref{XiFinite} reads
$$
\Xi = \sum_{(n,z)} r_n(z) v(z) = \sum_z \left(\sum_{n=0}^{K-1} \frac{\rho(z)^n}{n!} + \frac{\rho(z)^K}{K!} e^{\beta(z) } \right) v(z)<\infty.
$$
A finite invariant measure has the form 
$$
\eta(n,z) = \begin{cases} 
\frac{\rho(z)^{n}}{n!} v(z), & \qforq n <K, \\
\frac{\rho(z)^K \beta(z)^{n-K} }{ K! (n-K)!}  v(z), & \qforq n \ge K.
 \end{cases}
$$
\end{example}

Example \ref{example-population} below emerges in biological reproduction and population growth. 

\begin{example} \label{example-population}
Here we consider a linear growth model with immigration (\cite[Example 6.4]{ross2019introduction}): for each $z \in \mathcal{Z}$,  $\lambda_n(z) = n \lambda(z) + \theta(z)$ for $n\ge 0$ and $\mu_n(z) = n \mu(z)$ for $n\ge 1$. 
In this model each individual in the population gives birth at a rate  $\lambda(z)>0$;  in addition, there is an exponential rate of growth of the population $\theta(z)>0$ due to an external source (immigration). The death rate is given by $n \mu(z)$. 
Hence, 
\begin{align}\label{eqn-rn-population}
\begin{split}
r_n(z) &= \frac{\prod_{k=1}^n ((k-1)\lambda(z) + \theta(z)) }{n! \mu(z)^n} = \frac{\rho(z)^n}{n!} \prod_{k=1}^n \left[k-1 + \frac{\theta(z)}{\lambda(z)}\right] \\ & \qforq n \ge 0, \, z \in \mathcal{Z}, 
\end{split}
\end{align} 
where $\rho(z) = \lambda(z)/\mu(z) \in (0,\infty)$.  We also have 
\begin{align*}
\kappa_0(z) &= \left( 1+ \sum_{j=1}^\infty \frac{\rho(z)^j}{j!} \prod_{k=1}^j ( (k-1)+ \theta(z)/\lambda(z)) \right)^{-1},\\
\kappa_n(z) &= \kappa_0(z)  \frac{\rho(z)^n}{n!} \prod_{k=1}^n ((k-1)+ \theta(z)/\lambda(z)), \quad n \ge 1. 
\end{align*}
Condition \eqref{XiFinite} in this case is
$$
\Xi = \sum_{(n,z)} r_n(z) v(z) = \sum_{(n,z)} \frac{\rho(z)^n}{n!} \prod_{k=1}^n (k-1 + \theta(z)/\lambda(z)) v(z) <\infty. 
$$
A finite invariant measure on ${\mathbb N}\times{\mathcal Z}$ has the form 
$$
\eta(n,z) = r_n(z) v(z) = \frac{\rho(z)^n}{n!} \prod_{k=1}^n ( (k-1)+ \theta(z)/\lambda(z)) v(z), \qforq n \ge 0, \, z \in \mathcal{Z}. 
$$
\end{example} 

\begin{example} \label{example-growthstock}
The model from Example \ref{example-population} can be modified to model growth stocks such as Internet or biotech as proposed in \cite{kou2003modeling}. 
In that setting, the parameters $\lambda(z)$ and $\mu(z)$ represent the instantaneous appreciation and depreciation of the stock price due to market fluctuations, and the parameter $\theta(z)\ge 0$ represents the rate of increase in the stock price due to non-market factors such as the effect of additional shares via public offering. One can also include in the death rate an additional external effect parameter, that is, $\mu_n(z) = n\mu(z)+\vartheta(z)$, where $\vartheta(z)\ge 0$ captures the rate of decrease in the stock price due to non-market factors such as dividend payments (for most growth stocks, dividends are zero). 
In this case, we have
\begin{align*}
r_n(z) = \prod_{k=1}^n \frac{(k-1)\lambda(z) + \theta(z) }{k\mu(z) + \vartheta(z) }\,;
\end{align*} 
the rest of the construction is carried as in Example \ref{example-population}.
The  invariant measure $\eta$ can be used to study the size distribution of growth stocks. Such a model captures 
 seasonal   and environmental/external effects that may impact the growth of stock values. 
\end{example}

\section{Diffusive environment} \label{sec-diffusive} 

In this section, we consider birth-death processes with diffusive rates, where the environment process can be a general reflected jump diffusion process. 
We include the case of oblique reflection, and consider piecewise smooth domains. There is a well-developed theory of such processes \cite{stroock1971diffusion,tanaka1979stochastic}; see also an extensive bibliography in \cite{Lithuania}.

\subsection{Reflected  jump diffusion process}
Let us introduce a setting for models with a diffusive environment. 
We consider a jump diffusion process $\wtZ_n (t)$ moving in a piecewise smooth domain $D\subset \RR^d$ 
with smooth drift function $z\in D\mapsto b_n(z)$ and  non-degenerate diffusion matrix function $z\in D\mapsto\sigma_n(z)$,  
depending on $n\in{\mathbb N}$, and with  jumps and reflections described below. \footnote{We do not discuss at this point the exact conditions guaranteeing
the existence and uniqueness of process $\wtZ_n(t)$ in a general setting. In the considered examples, the existence and uniqueness
will be directly verified.}

A {\it domain} in $\RR^d$ is the closure of an open connected subset. A domain is called {\it smooth} if its boundary is a $(d-1)$-dimensional $C^2$ manifold. Consider an intersection of $m$ smooth domains $D_1, \ldots, D_m$: 
$
D = \cap_{i=1}^m D_i,
$
and assume that it has a boundary $\partial D$ with $m$ $(d-1)$-dimensional  {\it faces:} $F_i := \partial D \cap\partial D_i$. Then $D$ is called a {\it piecewise smooth domain} in $\RR^d$.  Denote by ${\tt n}_i(z)$ the inward unit normal vector to 
$\partial D_i$ at $z \in F_i$. An example is a {\it convex polyhedron} with $D_i$ being half-spaces.

Define a continuous function $\gamma_i: F_i\to \RR^d$ satisfying $\gamma_i(z) \cdot {\tt n}_i(z)>0$. Let $\ell_i=\{\ell_i(t): t\ge 0\}$ be continuous nondecreasing processes such that $\ell_i$ can only grow on $F_i$, for $i=1,\dots,m$. Given $n\in{\mathbb N}$ and $z\in D$,  let $\varpi_n(z, \cdot)$ be a finite measure on $D$ such that $\varpi_n(z,\cdot)\Rightarrow 
\varpi(z^0,\cdot)$ as $z\to z^0$ in $D$  (weak continuity). Let $J_n(t)$ be a process that is right continuous piecewise constant, with jump measure 
$\varpi (\,\cdot\,,\,\cdot\,)$  (in the course of process $\wtZ_n$ it will be $\varpi_n(\wtZ_n(t-), \cdot)$). 
The  process $\wtZ_n(t)$ is defined as the solution to the stochastic differential equation
\begin{equation}
d\wtZ_n(t) = b_n(\wtZ_n(t)) dt + \sigma_n(\wtZ_n(t)) d W(t) + dJ_n(t) + \sum_{i=1}^m \gamma_i(\wtZ_n(t)) d \ell_i(t),
\end{equation}
where $W(t)$ is a standard  $d$-dimensional Wiener process adapted to the natural filtration. The generator $\mathcal{A}_n$ 
 of $\wtZ_n$ takes the form
\begin{equation} \label{eqn-gen-A}
\mathcal{A}_ng(z) = b_n(z) \cdot \nabla g(z) + \frac{1}{2} \tr(\sigma_n(z)^{\rm T}\sigma_n(z) \nabla^2g(z)) + 
\int_{D} (g(z')-g(z)) \varpi_n(z, dz'), 
\end{equation}
 and acts on a function $g\in{\mathcal D}_z$ where 
$$
{\mathcal D}_z:= \{g \in C_b^2(D):\ \gamma_i(\tilde{z})\cdot \nabla g(\tilde{z}) = 0, \,\tilde{z} \in F_i,\, i=1,\dots,m\}.
$$

In models where the environment process is a jump diffusion in domain $D$, we set ${\mathcal Z}=D$.
  
\subsection{Joint generator} The joint Markov process $(N, Z)$ on $\NN\times \mathcal{Z}$ has the following generator:
\begin{equation}  \label{eqn-gen-L}
\mathcal{L}f(n,z) = \mathcal{M}_zf(n,z) + \beta_nr_n(z)^{-1}\mathcal{A}_nf(n,z),
\end{equation}
for any function $f$ in the domain of $\mathcal{L}$: 
$$
\mathcal{D}= \{f: \NN\times \mathcal{Z}\to \RR\,| f(n,\cdot) \in \mathcal{D}_z\, \forall n \in \NN\}.
$$
Here, $\beta_n$ is the {\it variability coefficient} for the diffusive environment depending on the state $n$, while $r_n(z)$ is the impact factor from the birth-death process as defined in \eqref{eqn-pi-n-BD}. Also,
\begin{equation} \label{eqn-gen-Mz}
\mathcal{M}_z g(n) = \lambda_n(z) (g(n+1) - g(n) ) + {\bf 1}_{n\neq 0} \mu_{n}(z) (g(n-1) - g(n)),
\end{equation}
for any function $g$ in the domain of $\mathcal{M}_z$ for  each given $z \in \mathcal{Z}$.
Denote by $P^t((n,z), \cdot)$ the transition kernel of $ (N,Z)$ for $(n,z) \in \NN\times \mathcal{Z}$. The joint Markov Process $ (N,Z)$ evolves 
as follows: 
\begin{itemize}
\item If $N(t) = n \in \NN$, then the component $Z(t)$ evolves as a reflected jump diffusion in $\mathcal{Z}$ with  generator 
$\beta_n (r_n(z))^{-1} \mathcal{A}_n$. 

\item If $Z(t) = z \in \mathcal Z$, then the component $N(t)$ jumps from $n$ to $n+1$ with rate $\lambda_n(z)$ and to $n-1$ with rate $\mu_n(z)$ (when $n\neq 0$).  That is, $N(t)$ evolves as a continuous-time Markov chain with the generator 
$\mathcal{M}_z$ in \eqref{eqn-gen-Mz}. 
\end{itemize}

\begin{remark} \label{rem-generator}
Under Assumption \ref{as-r0}, we have $r_n(z)\to 0$ as $n\to \infty$ for each $z$, so that $r_n(z)^{-1}\to \infty$ as $n\to\infty$. It 
may appear that in the joint generator in \eqref{eqn-gen-L}, the second component gets large when $n$ is large, i.e., the environment 
changes the states faster for larger values of $n$.  
However, $\beta_n$ can be relatively small so that $\beta_n r_n(z)^{-1}$ is not large when $n$ is large.  
In general, the generator $\mathcal{A}_n$ also depends on $n$, as will be discussed in Section \ref{sec-diff-gen-n} (see equation 
\eqref{eqn-gen-L-An}). 

For example, consider a Markovian queueing system in a random environment where the service speed is  increasing with the congestion level, while servers tend to break more frequently when the service speed is high.  The repairing rate may be also larger in that setting, so that the environment dynamics tends to evolve faster. However, external factors may prevent this from happening, for instance, the high cost of increasing service and/or repair speed.
Our formulation can be used to cover all these settings. \hfill $\Box$
\end{remark}

\subsection{Main results on the stationary distribution}  For simplicity, we will suppose in this section that the
generator ${\mathcal A}_n$ does not depend on $n$: ${\mathcal A}_n={\mathcal A}$. Consequently, the subscript $n$ 
is omitted from the related notation. Next, we make the following assumption.

\begin{assumption}\label{as-diff}
The reflected jump diffusion with generator $\mathcal{A}$ is positive recurrent, and has a unique invariant measure $\nu$, together with boundary measures  $\nu_{F_i}$, $i=1,\dots,m$. 
That is, there exists a stationary version of the process $\widetilde{Z}^* = \{\widetilde{Z}^*(t): t\ge 0\}$ such that  for all $t\ge 0$ \ $\widetilde{Z}^*(t) \sim \nu$ and for each 
$i=1,\dots,m$ and bounded function $f: F_i \to \RR$, 
$$
\mathbb{E} \int_0^t f(\widetilde{Z}^*(s)) d \ell_i(s) = t \int_{F_i} f(z) \nu_{F_i} (dz)\,.
$$
 Moreover, the measure $\nu$ satisfies
 \begin{equation} \label{eqn-Xi-finite-diff}
 \Xi := \sum_{n=0}^\infty \int_{\mathcal{Z}} r_n(z) \nu(dz) <\infty\,. 
 \end{equation}
\end{assumption} 

\begin{theorem} \label{thm-BD-diff} 
Under Assumptions \ref{as-r0} and \ref{as-diff}, the process $(N, Z)$ has  a finite invariant measure $\eta$ on
${\mathbb N}\times{\mathcal Z}$:
\begin{equation} \label{eqn-eta-diff}
\eta(\{n\},dz) =  r_n(z) \nu(dz).
\end{equation}
The corresponding probability measure is
\begin{equation} \label{eqn-pi-diff}
\pi(\{n\},dz) = \Xi^{-1} r_n(z) \nu(dz).
\end{equation}
The boundary measures $\pi_i$ for $F_i$ are given by 
\begin{equation}
\pi_i(\{n\},dz) = \Xi^{-1} r_n(z) \nu_{F_i}(dz).
\end{equation}
The process $(N,Z)$ is ergodic: for each $(n,z) \in \NN\times \mathcal{Z}$, 
\begin{equation}
\| P^t((n,z), \cdot) - \pi(\cdot)\|_{\mathrm{TV}} \to 0 \qasq t\to\infty.
\end{equation}
\end{theorem}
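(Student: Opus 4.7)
My plan is to verify directly that $\eta$ defined by \eqref{eqn-eta-diff} is an invariant measure for the generator $\mathcal{L}$ in \eqref{eqn-gen-L}, and then to conclude ergodicity from standard Harris-recurrence theory. The key feature that decouples the computation is that the density $r_n(z)$ of $\eta$ with respect to $\nu$ cancels exactly against the factor $r_n(z)^{-1}$ sitting in the environmental part of $\mathcal{L}$. Fix $f\in \mathcal{D}$; then
\begin{align*}
\int_{\mathbb{N}\times\mathcal{Z}}\mathcal{L}f\,d\eta \;=\; \sum_{n=0}^{\infty}\int_{\mathcal{Z}} r_n(z)\,\mathcal{M}_z f(n,z)\,\nu(dz) \;+\; \sum_{n=0}^{\infty}\beta_n\int_{\mathcal{Z}}\mathcal{A}f(n,z)\,\nu(dz).
\end{align*}
The first sum vanishes pointwise in $z$: using the detailed balance $r_n(z)\lambda_n(z)=r_{n+1}(z)\mu_{n+1}(z)$ inherited from \eqref{eqn-pi-n-BD}, expand $\sum_n r_n(z)\mathcal{M}_z f(n,z)$ into its four sub-sums and shift indices to pair each with its partner; the four terms cancel. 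The second sum vanishes term by term: for each $n$ the function $f(n,\cdot)$ lies in $\mathcal{D}_z$, i.e.\ it satisfies the reflection condition $\gamma_i\cdot\nabla f(n,\cdot)=0$ on every face $F_i$, so the invariance of $\nu$ under $\mathcal{A}$ from Assumption~\ref{as-diff} gives $\int_{\mathcal{Z}}\mathcal{A}f(n,z)\,\nu(dz)=0$. Finiteness of $\eta$ is exactly \eqref{eqn-Xi-finite-diff}, so the normalization $\pi=\eta/\Xi$ produces \eqref{eqn-pi-diff}.

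To identify the boundary measures $\pi_i$, I would relax the reflection condition and apply the computation to test functions $f(n,\cdot)\in C^2_b(\mathcal{Z})$ that need not lie in $\mathcal{D}_z$. For the base reflected jump diffusion, the stationary pair $(\nu,\nu_{F_i})$ satisfies an extended Green identity of the form $\int_{\mathcal{Z}}\mathcal{A}g\,d\nu=-\sum_i\int_{F_i}\gamma_i\cdot\nabla g\,d\nu_{F_i}$ for such $g$. Tracking the residual boundary defect in $\int \mathcal{L}f\,d\pi$ and matching it against the local-time characterization of $(N,Z)$ from Assumption~\ref{as-diff} yields $\pi_i(\{n\},dz)=\Xi^{-1}r_n(z)\nu_{F_i}(dz)$.

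For ergodicity I invoke the Meyn--Tweedie machinery. By Assumption~\ref{as-r0} the birth and death rates are strictly positive, so the $N$-component can reach any state in $\mathbb{N}$ while the environment is held in an arbitrary neighborhood; by Assumption~\ref{as-diff} the environmental diffusion is positive recurrent, and (together with non-degeneracy of $\sigma$ and weak continuity of the jump measure $\varpi_n$) this yields $\varphi$-irreducibility and aperiodicity of $(N,Z)$ on $\mathbb{N}\times\mathcal{Z}$. Existence of the invariant probability $\pi$ just constructed upgrades this to positive Harris recurrence, and the standard ergodic theorem delivers $\|P^t((n,z),\cdot)-\pi\|_{\mathrm{TV}}\to 0$.

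The step I expect to be most delicate is the boundary identification. The state-dependent time-change factor $\beta_n r_n(z)^{-1}$ in \eqref{eqn-gen-L} means the local times of $Z$ in the joint process are \emph{not} those of the base diffusion $\widetilde{Z}$, so care is required to trace how the boundary measures $\nu_{F_i}$ of Assumption~\ref{as-diff} reappear on the joint state space; the rest of the argument is computational once the cancellation mechanism between $r_n(z)$ and $r_n(z)^{-1}$ is observed.
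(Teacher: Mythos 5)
Your computation at the level of the generator is exactly the cancellation the paper exploits: the factor $r_n(z)^{-1}$ in $\mathcal{L}$ cancels against the $r_n(z)$ density of $\eta$; the detailed balance $r_n(z)\lambda_n(z)=r_{n+1}(z)\mu_{n+1}(z)$ kills the $\mathcal{M}_z$-part pointwise in $z$; and the basic adjoint relationship for $\nu$ kills the $\mathcal{A}$-part once the test function satisfies the oblique-reflection boundary condition. That algebraic core is the same as in the paper.

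Where you differ is the wrapper around this computation, and that is where the gap is. You infer stationarity directly from the identity $\int_{\mathbb{N}\times\mathcal{Z}}\mathcal{L}f\,d\eta=0$ for $f\in\mathcal{D}$, and you propose to identify the boundary measures by relaxing to $C^2_b$ test functions and ``tracking the residual defect.'' For a reflected (jump) diffusion this inference is not automatic: proving that a probability measure annihilating the generator on a core actually corresponds to a stationary \emph{process} — and in particular that the boundary local times of $Z$ inside the joint process $(N,Z)$, which carry the state- and $n$-dependent time change $\beta_n r_n(z)^{-1}$, produce exactly the boundary measures $\Xi^{-1}r_n(z)\nu_{F_i}(dz)$ — is precisely the content that requires machinery. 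The paper resolves this by casting $(N,Z)$ as a solution of a singular (constrained) martingale problem in the framework of Kurtz--Stockbridge \cite{kurtz2001stationary}: one specifies the pair $(\mu_0,\mu_1)$ (interior and boundary parts of the candidate stationary measure), the generators $A$ and $B$ (absolutely continuous and singular components), verifies their Condition~1.2, and then Theorem~1.7 delivers a stationary solution with those marginals, including the boundary measures. Your paragraph about ``extended Green identity'' and ``local-time characterization'' is a heuristic for this theorem, not a substitute for it. Invoking \cite{kurtz2001stationary} (or an Echeverria-type theorem adapted to reflected processes) is the missing step; once it is in place, your verification of the balance identity is exactly what the theorem requires, and the ergodicity argument via Harris recurrence and $\varphi$-irreducibility parallels what the paper does (by reference to \cite{PSBS}).
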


\begin{remark} \label{rem-diff-gen-n}
Note that the coefficient $\beta_n$ does not appear in the invariant measure $\eta(\{n\},dz)$  in \eqref{eqn-eta-diff}. 
In the  jump environment  with the joint generator \eqref{eqn-R-BD}, we had 
$$R[(n,z), (n,z')] = r_n(z)^{-1}  \tau_n(z,z')$$ 
where $\tau_n(z,z')$ depends on $n$. 
In that setting, the measure $\nu$ in Assumption \ref{as-Tn} is also independent on $n$. 
In the construction of the joint generator $\mathcal{L}$ in \eqref{eqn-gen-L}, the second component 
$ \beta_nr_n(z)^{-1}\mathcal{A}f(n,z)$ is purposely made in the multiplicative form $ \beta_n\mathcal{A}f(n,z)$ such that the dependence 
on $n$ is through the constant $\beta_n$.  However, this multiplicative construction does not entail  any effect of $\beta_n$ upon the invariant 
measure.  In Section \ref{sec-diff-gen-n}, we discuss a more general construction 
where the generator $\mathcal{A}_n$ of the reflected (jump) diffusion process depends on the state of the birth-death process through the reflection domains.  It is interesting to study further more general constructions of $\mathcal{A}_n$ with dependence on $n$. \hfill $\Box$
\end{remark}

\subsection{Examples} \label{sSect3.4} 
An M/M/1 queue with diffusive rates was already considered in \cite[Section 3]{PSBS} and can be regarded as a 
special case of the M/M/$K$ queue below, so it is omitted for brevity. 
 We start with some cases of  M/M/$\infty$ queues with diffusive rates  in Examples \ref{exm:RBM-arrival}
--\ref{example-MMinfty-compact}. 
 For simplicity, in these examples the drift and the diffusion coefficients are taken to be constant,
although an extension of the argument to the case where these coefficients varying with $n$ and $z$ is straightforward. 
Also the jump process $J(t)$ is disregarded, as well as the reflection processes $\ell_i (t)$. 
We also repeatedly use the notation $\wtZ (t)$
as an alternative to $Z(t)$,  
to describe an environmental process {\sl per se}, for a fixed value $n\in{\mathbb N}$.

Our goal is to construct a finite invariant $\eta$ for the joint Markov process $(N,Z)$. The invariant probability distribution $\pi$
will be obtained after the normalization.

\begin{example} \label{exm:RBM-arrival}
In this example, the arrival rate  $\lambda(\cdot)$ is  a reflected Brownian motion (RBM)  in the positive half-line $\RR_+$ (i.e., ${\mathcal Z}=D
=(0,\infty)$), with a negative drift $-c$ and diffusion coefficient $\sigma$: $\lambda(\cdot) = \widetilde{Z}$ where 
$ \widetilde{Z}(t) = -ct + \sigma W(t) +\ell(t) $ with $c,\sigma >0$ and $\ell(t)$ is the regulating process (continuous and nondecreasing, $\ell(0)=0$ and $\ell(t)$ only increases at times  when $\wtZ(t)=0$).  Also, let the service rate $\mu(t) \equiv \mu$ be a constant. 
The process $\widetilde{Z}$ has the exponential stationary distribution
\begin{equation}
\label{eq:exponential}
\nu(dz) = \frac{2c}{\sigma^2} \exp\Big(- \frac{2c}{\sigma^2}z \Big)dz,\quad  z>0.
\end{equation}
We also have $r_n(z) = (z/\mu)^n/n!$ where $n\in{\mathbb N}$.  Condition \eqref{eqn-Xi-finite-diff} means that 
\begin{align*}
\Xi  &=  \sum_{n=0}^\infty \int_0^\infty r_n(z) \nu(dz) = \int_0^\infty \Big( \sum_{n=0}^\infty\frac{(z/\mu)^n}{n!} \Big) \frac{2c}{\sigma^2} \exp\Big(- \frac{2c}{\sigma^2}z \Big)dz \\
& =\int_0^\infty e^{z/\mu}  \frac{2c}{\sigma^2} \exp\Big(- \frac{2c}{\sigma^2}z \Big)dz  =  \frac{2c}{\sigma^2}\int_0^\infty  e^{ - \big( \frac{2c}{\sigma^2}-1/\mu\big) z}dz  \\
& =  \frac{2c}{\sigma^2}\Big( \frac{2c}{\sigma^2}-1/\mu\Big)^{-1}<\infty,
\end{align*}
which is the case when 
\begin{equation}
\label{eq:Xi-finite1}
\frac{2c}{\sigma^2}-\frac{1}{\mu} >0.
\end{equation}
Under condition~\eqref{eq:Xi-finite1},  a finite invariant measure of the joint process $(N,Z)$  on 
${\mathbb N}\times{\mathbb R}_+$ is given by: 
$$
\eta(\{n\},dz) = r_n(z)\nu(dz)  = \frac{(z/\mu)^n}{n!}  \frac{2c}{\sigma^2} \exp\Big(- \frac{2c}{\sigma^2}z \Big)dz\,. 
$$
\end{example}

\begin{example}\label{exm:RBM-service} 
 Let $\lambda(\cdot) \equiv \lambda$ be a constant and $\mu(t)$ be an RBM with a negative drift in  $(\mu_0, \infty)$ 
with $\mu_0>0$. We have the same invariant measure $\nu(dz)$ from~\eqref{eq:exponential}, shifted by $\mu_0$, with the 
 density
\begin{equation}\label{eqn-nu-MMInf-service}
\nu(dz) =\frac{2c}{\sigma^2} \exp\Big(- \frac{2c}{\sigma^2}(z - \mu_0)\Big)d z,\quad z>\mu_0.
\end{equation}
Next, $r_n(z) = (\lambda/z)^n/n!$.  The condition  \eqref{eqn-Xi-finite-diff} is satisfied: 
\begin{align*}
\Xi & =  \sum_{n=0}^\infty \int_{\mu_0}^\infty r_n(z) \nu(dz) = \int_{\mu_0}^\infty 
\Big( \sum_{n=0}^\infty  \frac{(\lambda/z)^n}{n!} \Big) \frac{2c}{\sigma^2} 
\exp\Big(- \frac{2c}{\sigma^2}(z - \mu_0) \Big)dz \\ &= \int_{\mu_0}^\infty 
e^{\lambda/z}  \frac{2c}{\sigma^2} \exp\Big(- \frac{2c}{\sigma^2}(z - \mu_0) \Big)dz < \infty.
\end{align*}
(If we set $\mu_0 = 0$, then this integral would be infinite.) A finite invariant measure of the joint process $(N,Z)$ is 
$$
\eta(\{n\},dz) = r_n(z)\nu(dz)  = \frac{(\lambda/z)^n}{n!}  \frac{2c}{\sigma^2} \exp\Big(- \frac{2c}{\sigma^2}(z-\mu_0) \Big)dz\,. 
$$
\end{example}

\begin{example}  \label{example-MMinfty-2dRBM}  Here we assume that the pair $(\lambda, \mu)$ forms a two-dimensional RBM 
in the shifted positive quadrant $\RR_+ \times (\mu_0, \infty)$ (the restriction on $\mu_0>0$ emerges in \eqref{eq:Xi-finite2} below). 
Specifically, $\lambda(t)=\wtZ_1$ and $\mu(t)=\wtZ_2(t)$, $t\geq 0$, where $\wtZ = (\wtZ_1,\wtZ_2)$ is an RBM in 
${\mathcal Z}=\RR_+ \times (\mu_0, \infty)$, given by 
$\wtZ(t) = c t + \sigma W(t) + R Y(t)$, where $c = (c_1,c_2)^{\rm T}\in{\mathbb R}^2$ is a drift vector, $\sigma$ is a $2\times 2$ 
positive-definite diffusion matrix, $R$ is a (non-singular) reflection matrix, 
and $Y(t)=(Y_1(t),Y_2(t))^{\rm T}$ is a regulating process (continuous and 
nondecreasing, with $Y(0)=0$ and $Y_i$ increasing at times when $\wtZ_i(t)=0$ for $i=1,2$). Suppose the process
 $\wtZ$ is positive recurrent, and the skew-symmetry condition is satisfied: $2 \Sigma = R D + D R^{\rm T}$ where 
$\Sigma = \sigma^2$ and $D =\text{diag}\{\Sigma_{ii},\,i=1,2\}$. 
It is shown in \cite{harrison1981distribution} that the invariant measure $\nu$ for $\wtZ$ has an explicit product form:
$$\nu(dz_1,d z_2) =  \alpha_1\alpha_2 e^{-\alpha_1 z_1 - \alpha_2(z_2-\mu_0)}d z_1 d z_2,\quad z_1>0,\;z_2>\mu_0,$$  
where $\alpha_i = 2 c_i \xi_i/\Sigma_{ii}$ and $\xi= R^{-1} c$. See also the survey \cite{williams1995semimartingale}. 
Assume that 
\begin{equation}\label{eq:Xi-finite2}\mu_0>\dfrac{1}{\alpha_1}\,.\end{equation}

We have $r_n(z) = (z_1/z_2)^n/n!$. We also obtain
\begin{align*}
\Xi &= \sum_{n=0}^\infty \int_0^\infty\int_{\mu_0}^\infty  \frac1{n!}\left(\frac{z_1}{z_2}\right)^n\alpha_1\alpha_2 e^{-\alpha_1z_1 - \alpha_2(z_2-\mu_0)} dz_1 dz_2 \\ & = \int_0^{\infty}\int_{\mu_0}^{\infty}e^{z_1/z_2}\alpha_1\alpha_2e^{-\alpha_1z_1 - \alpha_2(z_2-\mu_0)}dz_1dz_2 \\
& = \alpha_1\alpha_2 \int_{\mu_0}^{\infty}  \Big(  \int_0^{\infty}
e^{-(\alpha_1 - 1/z_2) z_1}dz_1 \Big)  e^{-\alpha_2 (z_2-\mu_0)}dz_2\\
& =   \alpha_1\alpha_2  \int_{\mu_0}^{\infty}
\frac{1}{\alpha_1- 1/z_2}  e^{-\alpha_2 (z_2-\mu_0)}dz_2\\
& <   \alpha_1\alpha_2  \int_{\mu_0}^{\infty}
\frac{1}{\alpha_1- 1/\mu_0}  e^{-\alpha_2(z_2-\mu_0)}dz_2\\
& = \frac{ \alpha_1\alpha_2}{\alpha_1- 1/\mu_0} \times \frac{1}{\alpha_2} =  \frac{ \alpha_1  }{\alpha_1- 1/\mu_0}<\infty. 
\end{align*}
Here in the fourth and fifth lines we have used the condition \eqref{eq:Xi-finite2}. 
Thus, a finite invariant measure of the joint process $(N,Z)$ on ${\mathbb N}\times{\mathcal Z}$ is given by 
$$
\eta(\{n\},dz_1, dz_2) = r_n(z)\nu(dz)  =\frac{(z_1/z_2)^n}{n!}    \alpha_1\alpha_2 e^{-\alpha_1 z_1 - \alpha_2(z_2-\mu_0)}d z_1 d z_2\,.  
$$

\end{example}

\begin{example} \label{ex3.4}
Let us modify Example~\ref{exm:RBM-arrival} to make the arrival rate a reflected Ornstein-Uhlenbeck process: $\lambda(t) =\wtZ(t)$, 
where $\wtZ$ solves the stochastic differential equation  with reflection on $\RR_+$:
\begin{equation}
\label{eq:OU}
d\wtZ(t) = -c\wtZ(t)\,dt + \sigma dW(t) + d\ell(t),
\end{equation}
where $c,\sigma >0$ and $\ell(t)$ is the regulating process. By \cite{ward2003properties}, the  invariant measure for $\wtZ $ is one-sided Gaussian:
\begin{equation}
\label{eq:OU-measure}
\nu (dz) = \exp\left(-\frac{2c}{\sigma^2}z^2\right)dz,\quad  z>0.
\end{equation}
Similarly to Example~\ref{exm:RBM-arrival}, $r_n(z) = (z/\mu)^n/n!$. 
Condition \eqref{eqn-Xi-finite-diff} is satisfied for any $\mu >0$ since 
\begin{align*}
\int_0^{\infty}&\sum_{n=0}^{\infty}r_n(z)v(z)\,dz = \int_0^{\infty}\exp\left(\frac{z}{\mu} - \frac{2c}{\sigma^2}z^2\right)\,dz  \\
&=  \exp\Big(\frac{\sigma^2}{2c\mu^2}\Big) \int_0^\infty \exp\left( -\frac{2c}{\sigma^2} \Big(z-  \frac{\sigma^2}{4c\mu}\Big)^2   \right) d z =  \sigma \sqrt{\frac{\pi}{2c}} \Phi\Big(- \frac{\sigma^2}{4c\mu}\Big)<\infty, 
\end{align*} 
where $\Phi(\cdot)$ is the  cumulative distribution function of the standard normal distribution. 
Thus,  a finite invariant measure of the joint process $(N,Z)$ is given by 
$$
\eta(\{n\},dz) = r_n(z)\nu(dz)  = \frac{(z/\mu)^n}{n!} \exp\left(-\frac{2c}{\sigma^2}z^2\right)dz.
$$
\end{example}

\begin{example}  \label{ex3.5}
Next, let us modify Example~\ref{exm:RBM-service} to make the service rate a reflected Ornstein-Uhlenbeck process~\eqref{eq:OU}, but on $[\mu_0, \infty)$ for some $\mu_0>0$.
  Its stationary distribution has density proportional to $\nu$ from~\eqref{eq:OU-measure}. 
Here we have $r_n(z) = (\lambda/z)^n/n!$.
So  condition \eqref{eqn-Xi-finite-diff}
is satisfied for any $\lambda\geq 0$ by verifying  that
$$
\int_{\mu_0}^{\infty}\sum_{n=0}^{\infty}r_n(z)v(z)\,dz = \int_{\mu_0}^{\infty}\exp\left[\frac{\lambda}z - \frac{2c}{\sigma^2}(z - \mu_0)^2\right]\, dz < \infty.
$$ 
(The integral is finite since $z>\mu_0>0$.)  Then the invariant measure of the joint process $(N,Z)$ is given accordingly. 

One could also consider the model  where the arrival and service form a two-dimensional reflected  Ornstein-Uhlenbeck process 
in the positive orthant ${\mathbb R}_+\times (\mu_0,\infty )$. However, the explicit expression for its invariant measure is not known. 

Examples \ref{ex3.4} and \ref{ex3.5} can be extended to one-dimensional reflected diffusions with a piecewise linear drift, which will 
have truncated Gaussian invariant measures, see \cite{browne1995piecewise}. 
\end{example}

\def\ov{\overline}

\begin{example}\label{example-MMinfty-compact}
Examples \ref{exm:RBM-arrival}--\ref{ex3.5} can be modified to include constrained diffusions. 
For instance, one can 
think of an M/M/$\infty$ queue where the diffusive pair $(\lambda ,\mu)=\wtZ$ moves within a compact domain 
$D\subset{\mathbb R}_+\times{\mathbb R}_+$ with a piecewise smooth boundary $\partial D$ and with the normal reflection at $\partial D$. 
For general `nice' 
drift and diffusion coefficients, the Lebesgue 
measure on $D$ will be invariant for the process $\wtZ$, and the above-type calculations  could be carried through, guaranteeing a finite
invariant measure for the joint process $(N,Z)$. 
\end{example}

\begin{example}
\label{exm:RBM-arrival-jump}
Let us modify Example \ref{exm:RBM-arrival} to include Poisson jumps. Consider a  jump diffusion process $\wtZ$
on half-line ${\mathbb R}_+$ with  constant negative drift $-c$ and diffusion 
 coefficient $\sigma >0$, and with i.i.d. jumps with  intensity  $\kappa$ and 
a distribution of jump size ${\rm K}  (dz)$ (so that $\kappa{\rm K}(\cdot)$ 
is the spectral measure), supported on $\mathbb R_+$ (so that the jumps are to the right).  The generator of 
$\wtZ$ is given by 
\begin{equation}
\label{eq:generator-jumps}
\mathcal Lf(x) = -cf'(x) + \frac{\sigma^2}2f''(x) + \kappa\int_0^{\infty}[f(x+y) - f(x)]\,{\rm K}(\mathrm{d}y),\quad x>0,
\end{equation}
for $f : \mathbb R_+ \to \mathbb R$ in $C^2$ with condition $f'(0) = 0$. The combined drift in $\wtZ$ equals
$-c + \kappa\overline{\rm K}$ where $\ov{\rm K}:=\int_0^\infty y{\rm K} (dy)$. If this drift is negative (assuming
$c>\kappa{\overline{\rm K}}$), then the  process 
$\wtZ$ is ergodic, see \cite[Section 6]{sarantsev2016exp}.  Let $\nu$ denote the stationary distribution of $\wtZ$.
 The MGF $\Psi_{\nu}(u)=\int_0^\infty e^{uy}\nu (dy)$ can be determined by 
rewriting~\eqref{eq:generator-jumps} in terms of the 
Laplace transform.  Denote by $\Psi_{\rm K}$ the MGF of the jump measure $\rm K$:  
$\Psi_{\rm K}(u)=\int_0^\infty e^{uy}{\rm K}(dy)$. After adjusting 
the notation of \cite[Example 4.3]{forward}, the MGF  $\Psi_{\nu}$  becomes
\begin{equation}
\label{eq:MGF-levy}
\Psi_{\nu}(u) := \frac{Mu}{F(u)}\,, \;\hbox{ where }\; F(u) := cu - \frac{1}{2}\sigma^2u^2 - 
\kappa\Psi_{\rm K}(u) + \kappa\,.\end{equation}
Note that $F$ is a concave function, and it has two zeros: $u = 0$ and $u = u_0 > 0$. The latter is true since $F'(0) 
= c - \kappa\Psi_{\rm K}'(0) = c - \kappa\overline{\rm K}> 0$. 
Similarly to Example~\ref{exm:RBM-arrival}, the quantity $\Xi$ is equal to the value of the MGF of this stationary distribution: 
$\Xi = \Psi_{\nu}(\mu^{-1})$.  Therefore, $\Xi <\infty$ if $\mu^{-1} < u_0$. This condition is an analogue of~\eqref{eq:Xi-finite1}.

Similar extensions of continuous diffusive cases to jump-diffusion cases could be done for the previous examples.
\end{example}

\begin{example} \label{example-MMK-diff}
In this example, we consider M/M/$K$ queues with the following types of diffusive arrival and/or service rates:

(a)  $\lambda$ is an RBM  in $[0, \mu K]$ where $\mu = \mathrm{const}$;

(b)  $\lambda = \mathrm{const}$ and $\mu$ is an RBM with a negative drift in $[\mu_0, \infty)$ where $\mu_0>\lambda/K$;

(c)  $(\lambda, \mu)$ is a two-dimensional RBM in a wedge $\{z=(z_1,z_2)\in \RR^2_+: z_1 \le z_2 K\}$. 

In all three cases, we have $r_n(z)$ given by \eqref{eqn-rn-MMK} in Example \ref{example-MMK-jump}. Case (a) is similar to
 the one discussed in  Example \ref{exm:RBM-arrival-jump}. 
 In 
Case (b), we have an invariant measure $\nu(dz)$ given in \eqref{eqn-nu-MMInf-service}, so 
 condition \eqref{eqn-Xi-finite-diff} requires that 
\begin{align*}
\Xi 
&= \sum_{n}\int_{\mu_0}^\infty r_n(z) \nu(dz) \\ & 
 = \int_{\mu_0}^\infty  \left(\sum_{n=0}^{K-1}\frac{(\lambda/z)^{n}}{n!} +  \sum_{n=K}^\infty \frac{(\lambda/z)^n }{K!K^{n-K}} \right)  \frac{2c}{\sigma^2} \exp\Big(- \frac{2c}{\sigma^2}(z - \mu_0) \Big)dz <\infty.
\end{align*}
Case (c) seems more challenging and requires a detailed analysis  of the invariant measure of two-dimensional RBMs in a wedge (see, e.g.,  \cite{dieker2009reflected}).

For an M/M/$K$/$0$ queue, there is no stability concern, and the queueing state  space is finite. Hence, the underlying diffusive environment for either arrival or service rates or both can be any reflected jump diffusion of the above type as long as the invariant measure $\nu$ exists. Then a finite invariant measure for  $(N,Z)$ is $\eta(\{n\}, dz) = r_n(z) \nu(dz)$ where $r_n(z)$ is given in Example \ref{example-MMK0}. 
\end{example}

\begin{example} \label{example-MMK+M-diff}
For  an M/M/$K$+M queue, as discussed in Example \ref{example-MMK+M}, the arrival, service and abandonment rates 
 $\lambda$, $\mu$ and $\gamma$ can all depend on the environment. An interesting case is where the triple 
$(\lambda(\cdot), \mu(\cdot), \gamma(\cdot))$ evolves as an RBM $\wtZ=(\wtZ_1,\wtZ_2,\wtZ_3)$ in a shifted octant 
$\RR_+\times (\mu_0,\infty)\times (\gamma_0,\infty) \subset \RR_+^3$, of the form $\wtZ(t)=ct +\sigma W(t) + RY(t)$,
similar to Example \ref{example-MMinfty-2dRBM} (but in three dimensions). As shown in  \cite{harrison1981distribution}, 
under the positive recurrence and skew-symmetry conditions  the process $\wtZ$ has a product-form invariant measure 
$$\nu(dz_1,d z_2, d z_3) =  \alpha_1\alpha_2 \alpha_3 e^{-\alpha_1 z_1 - \alpha_2(z_2-\mu_0) - \alpha_3 (z_3-\gamma_0)}d z_1 d z_2 d z_3\,.$$ 
Let us assume that constants $\mu_0>0$ and $\gamma_0>0$ satisfy 
$\gamma_0>1/\alpha_1$ where $\alpha_1=2c_1\xi_1/\Sigma_{11}$. Cf. Example \ref{example-MMinfty-2dRBM}. 
 
We have the same formula  for $r_n(z)$ as in \eqref{eqn-rn-MMK+M}, with $\rho(z)=z_1/z_2$ and $\beta(z) = z_1/z_3$. 
Thus, condition \eqref{eqn-Xi-finite-diff} requires that 
\begin{align*}
\Xi  
&= \sum_{n}\!\int\limits_{\RR_+\times [\mu_0,\infty)\times [\gamma_0,\infty)}\!\!\!\! r_n(z) v(dz)  \\
&=\!\!\! \int\limits_{\RR_+\times [\mu_0,\infty)\times [\gamma_0,\infty)}\!\!\! \left(\sum_{n=0}^{K-1} \frac{(z_1/z_2)^n}{n!} + \frac{(z_1/z_2)^K}{K!} e^{z_1/z_3} \right)    \\
& \qquad \qquad  \times \alpha_1\alpha_2 \alpha_3 e^{-\alpha_1 z_1 - \alpha_2(z_2-\mu_0) - \alpha_3 (z_3-\gamma_0)}d z_1 d z_2 d z_3 < \infty.
\end{align*}
It is easy to check, similarly to Example \ref{example-MMK+M}, that the first component of the integral is finite as 
$z_2>\mu_0>0$ while the second and third components are finite for  $z_2>\mu_0>0$ and $z_3>\gamma_0>1/\alpha_1$. 
Thus, a finite invariant measure for $(N,Z)$ is 
\begin{align*}
&\eta(\{n\}, dz_1,dz_2, dz_3)= \left(\sum_{n=0}^{K-1} \frac{(z_1/z_2)^n}{n!} + \frac{(z_1/z_2)^K}{K!} e^{z_1/z_3} \right)  \\
&\qquad \qquad\qquad\qquad \qquad \qquad  \times  \alpha_1\alpha_2 \alpha_3 e^{-\alpha_1 z_1 - \alpha_2(z_2-\mu_0) - \alpha_3 (z_3-\gamma_0)}
d z_1 d z_2 d z_3\,. \end{align*}
The cases where only one or two of the three parameters are diffusive can be considered in a similar manner.  
\end{example}

\begin{example} \label{example-population-diff}
For the linear population growth model with immigration  from Example \ref{example-population}, the parameters 
$\lambda, \mu, \theta$ can  again be all diffusive, represented by a three-dimensional RBM $\wtZ$ in 
$\RR_+\times [\mu_0,\infty)\times \RR_+$, as was discussed in Example \ref{example-MMK+M-diff}. 
The constant $\mu_0$ must satisfy $\mu_0>1/\alpha_1$. The process $\wtZ$ has a 
product-form invariant measure
 $\nu(dz_1,d z_2, d z_3) =  \alpha_1\alpha_2 \alpha_3 e^{-\alpha_1 z_1 - \alpha_2(z_2-\mu_0) - \alpha_3 z_3}d z_1 d z_2 d z_3$. 
 Here we have the  same formula for $r_n(z)$ as in \eqref{eqn-rn-population} with $\rho(z) = z_1/z_2$. 
 Condition \eqref{eqn-Xi-finite-diff} 
 requires now that 
\begin{align*}
&\Xi = \!\!\!\int\limits_{\RR_+\times [\mu_0,\infty)\times \RR_+}\!\!\!\sum_n
\frac{(z_1/z_2)^n}{n!} \prod_{k=1}^n \Big(k-1 + \frac{z_3}{z_1}\Big)\\
& {}\qquad\qquad\times \alpha_1\alpha_2 \alpha_3 e^{-\alpha_1 z_1 - \alpha_2(z_2-\mu_0) 
- \alpha_3 z_3}d z_1 d z_2 d z_3 <\infty 
\end{align*}
which  is finite under the condition $z_2>\mu_0>1/\alpha_1$, similarly  to Example \ref{example-MMK+M-diff}. 
Thus  a finite invariant measure of $(N,Z)$ is 
\begin{align*}
&\eta(\{n\}, dz_1,dz_2, dz_3) = \frac{(z_1/z_2)^n}{n!} \prod_{k=1}^n \Big(k-1 + \frac{z_3}{z_1}\Big)\\
&\qquad\qquad\qquad \qquad \qquad \times   \alpha_1\alpha_2 \alpha_3 e^{-\alpha_1 z_1 - \alpha_2(z_2-\mu_0) - \alpha_3 z_3}d z_1 d z_2 d z_3\,. 
\end{align*}

The growth stock model in Example \ref{example-growthstock} with four parameters can also be considered analogously, with 
the parameters evolving as a four-dimensional RBM in a subset of $\RR_+^4$. 

\end{example}

\subsection{ Reflected jump diffusion environment with a variable domain.} \label{sec-diff-gen-n}
In this section we consider a more general setup where the domain of the (jump) diffusion varies with the state $n$ 
of the birth-death process. Such a setup was considered in Section 3.4 of \cite{PSBS}; here we review the construction 
and provide an example.

For each $n\in \NN$, let $D_n \subset \mathbb{R}^d$ be a piecewise smooth domain with $m_n$ faces 
$F^{(n)}_1, \dots, F^{(n)}_{m_n}$ of the boundary $\partial D_n$ and reflection vector fields $f^{(n)}_i: F^{(n)}_i \to \RR^d$. 
 Set $\mathcal{Z}=\cup_{n\in\NN} D_n$. We construct the joint Markov  process $(N, Z)$ on $\NN\times \mathcal{Z}$ 
via the following generator  ${\mathcal L}$ and its domain
${\mathcal D}$: 
\begin{align}
\label{eqn-gen-L-An}
\begin{split}
\mathcal{L}f(n,z) &= \mathcal{M}_zf(n,z) + \beta_nr_n(z)^{-1}\mathcal{A}_nf(n,z),
\\ \mathcal{D} &= \{f: \NN\times \mathcal{Z}\to \RR\,:\, f(n,\cdot) \in{\mathcal D}^n_z\, \quad  \forall\; n \in \NN\}.
\end{split}
\end{align}
 The generator $\mathcal A_n$ and its domain ${\mathcal D}^n_z$ are given by
\begin{align}\label{eqn-gen-An} \begin{split}
\mathcal{A}_n g(z) &= b_n(z) \cdot \nabla g(z) + \frac{1}{2} \tr(\sigma_n(z)^{\rm T}\sigma(z) \nabla^2g(z)) 
+ \int_{D_n} (g(z')-g(z)) \varpi_n(z, dz'), \\
{\mathcal D}^n_z &= \{g \in C_b^2(D_n)\ :\ \gamma_i(\tilde{z})\cdot \nabla g(\tilde{z}) = 0, \,\tilde{z} \in F^{(n)}_i,\, 
\end{split}\end{align}

We modify the conditions in Assumption \ref{as-diff} as follows. 
Here we assume that  
the reflected jump diffusion with generator $\mathcal{A}_n$ is positive recurrent and has a finite invariant measure 
$\nu_n$,  with boundary measures $\upsilon^{(n)}_{F^{(n)}_i}$, $i=1,\dots,m_n$. 
Moreover, the measures $\nu_n$ satisfy the property similar to \eqref{eqn-Xi-finite-diff}
 \begin{equation}\label{eqn-Xi-finite-diff2}
 \Xi := \sum_{n=0}^\infty \int_{D_n} r_n(z) \nu_n(dz) <\infty. 
 \end{equation}

 Then, by modifying the proof of Theorem \ref{thm-BD-diff}, we can show that under the above conditions, the joint 
Markov process $(N,Z)$ has a unique invariant  probability distribution 
 $$
 \pi(\{n\}, dz) = \Xi^{-1} r_n(z) \nu_n (dz), 
 $$
 and the corresponding boundary measures   $\pi^{(n)}_{F^{(n)}_i}$ on $F^{(n)}_i$ (if there is a reflection) 
are given by 
 $$
  \pi_{F^{(n)}_i}(\{n\}, dz) = \Xi^{-1} r_n(z) \upsilon^{(n)}_{F^{(n)}_i} (dz), \quad i =1,\dots, m_{n}. 
 $$

\begin{example}
Recall that in  Examples \ref{exm:RBM-arrival}--\ref{example-MMinfty-compact}, we have discussed the $M/M/\infty$ queues 
with arrival and/or service rates being a reflected diffusion.
However, the domain may depend on the state of the queue. For instance, Example 
\ref{exm:RBM-service}, one can consider the service rate being an RBM with a negative drift in 
$[\mu_n, \infty)$ where $\mu_n>0$.  We then  obtain an
invariant measure 
\begin{equation*}
\nu_n(dz) =\frac{2c}{\sigma^2} \exp\Big(- \frac{2c}{\sigma^2}(z - \mu_n)\Big)d z\,,\quad z>\mu_n \,.
\end{equation*}
Condition \eqref{eqn-Xi-finite-diff2} takes the form 
\begin{align*}
\Xi
 & =  \sum_{n=0}^\infty \int_{\mu_n}^\infty r_n(z) \nu_n(dz) 
= \sum_{n=0}^\infty \int_{\mu_n}^\infty \Big(   \frac{(\lambda/z)^n}{n!} \Big) \frac{2c}{\sigma^2} \exp\Big(- \frac{2c}{\sigma^2}(z - \mu_n) \Big)dz 
 < \infty\,.
\end{align*}
A finite invariant measure for $(N,Z)$ reads 
$$
\eta(\{n\},dz) = r_n(z)\nu_n(dz)  = \frac{(\lambda/z)^n}{n!}  \frac{2c}{\sigma^2} 
\exp\Big(- \frac{2c}{\sigma^2}(z-\mu_n) \Big)dz\,. $$
 Similar extensions can be done in the other examples in  Section \ref{sSect3.4}. 
\end{example}

\section{Exponential Convergence to Stationarity} \label{sec-conv-rate}

 In this section we study the rate of convergence to stationarity of the joint Markov process $(N,Z)$ constructed in the previous two sections. 
We focus on the diffusive random environment; the jump environment can be studied similarly. We  consider convergence in the total variation distance, cf. \eqref{eq:TVD}. 
 If $P^t(x, \cdot)$ is the transition function 
of a process, and $\pi$ is its stationary distribution, we say that $r(\cdot)$ is the convergence rate if for all $x$ and $t > 0$ 
we get:
$$
\|P^t(x, \cdot) - \pi(\cdot)\|_{\mathrm{TV}} \le C(x)r(t)
$$
for some $C(x)$. We consider two scenarios where the convergence rate is exponential: $r(t) = e^{-ct}$ with a constant $c > 0$. In each scenario, we assume an exponential convergence rate for the process of random environment. Likewise, in each scenario, we impose conditions on the birth and death rates which result in an exponential rate for the process $(N,Z)$. In the first scenario,   the underlying birth-death process satisfies a stability condition  \eqref{eq:main-bounds} in Assumption \ref{asmp:bounds}, which covers queueing models such as M/M/1 and M/M/$K$ queues. On the other hand, in the models like M/M/$\infty$, M/M/$K$/$0$ and M/M/$K$+M, the underlying birth-death process does not require any condition for stability. In those models only mild assumptions upon the birth and death rates are needed, summarized in Assumption \ref{asmp:bounds-s2} as we discuss in the second scenario. 

 Classic articles on exponential convergence for a general Markov process are 
\cite{meyn1993stabilityII, meyn1993stabilityIII}. They use Lyapunov functions $V$ for which ${\mathcal G}V \le - kV$ outside a `small' set  in the state space of the process. Here
$\mathcal G$ is the generator of the Markov process, and $k > 0$ is a constant. There exists a substantial literature on this topic.  

An important property of continuous-time Markov processes on the real line $\mathbb R$ is {\it stochastic ordering:} two copies $X_1$ and $X_2$ of a process starting from points $x_1 \le x_2$ can be coupled so that $X_1(t) \le X_2(t)$ for all $t \ge 0$ a.s. Many 
common processes satisfy this property, including birth-death processes and reflected diffusions on the half-line. The second author of this article combined the approach from \cite{meyn1993stabilityII, meyn1993stabilityIII} with stochastic ordering and a coupling argument to estimate explicit rates of exponential convergence; cf \cite{sarantsev2016exp}.

In this article, we use a coupling argument to establish the rates of convergence by adapting the  methods from \cite{lindvall1979note} and also generalizing the approach used in \cite{PSBS} for M/M/1 queues in a random environment.

\subsection{The first scenario}

Define 
\begin{equation}\label{eq:pbarz}
q_i(z) := \lambda_i(z) + \mu_i(z)\,,\quad p_i(z) = \frac{\lambda_i(z)}{q_i(z)}\,,\quad i \in \NN,\quad z\in \mathcal{Z}.
\end{equation}

\begin{assumption}\label{asmp:bounds}
The parameters $p_i(z)$ and $q_i(z)$ of the birth-death process satisfy the following bounds: 
\begin{equation}
\label{eq:main-bounds>0}
\overline{q} := \inf\limits_{z \in \mathcal Z} \inf_i q_i(z) > 0\,,
\end{equation}
\begin{equation}
\label{eq:main-bounds}
\overline{p} := \sup_{z \in \mathcal Z}\sup_i p_i(z) < 1/2\,.
\end{equation}
\end{assumption}

\begin{remark}
 Condition \eqref{eq:main-bounds} in Assumption \ref{asmp:bounds} was imposed to get an exponential rate of 
convergence of a birth-death process (without environment states) in \cite[Proposition 2]{lindvall1979note}. For 
queueing examples with a finite number of servers, the second condition $\overline{p}<1/2$ in~\eqref{eq:main-bounds} implies that the 
traffic intensity is less than one. In the linear population growth model in Example \ref{example-population-diff}, a sufficient condition for this property to hold is  that $\lambda(z)<\mu(z)$ for each $z$. 

In addition, condition \eqref{eq:main-bounds} is used in the coupling  argument where a  dominating embedded Markov chain 
with  a parameter $\overline{p}$ is introduced. This is critical in Step 4 of the proof of Theorem \ref{thm-RC-exp} below. See 
also Remark \ref{rem-hittingtime}. \hfill $\Box$
\end{remark}

We now  introduce an assumption of an exponential rate of convergence  for the environmental process with generator 
$\mathcal{A}$ (which is the same as \cite[Assumption 4.1]{PSBS}). It imposes  an exponential-tail condition on the coupling time 
for the environmental process. Examples of processes satisfying this assumption are also given in 
\cite[Section 4.3.2]{PSBS}, for instance,  a RBM on $[0,a]$ in \cite[Chapter 2, Problem 8.2]{karatzas1991stochastic}. 

\begin{assumption}\label{asmp:basic} 
There exist constants $\alpha > 1$ and $\gamma > 0$ such that for all $z_1, z_2 \in \mathcal Z$  the processes 
$Z_1$ and $ Z_2$ with generator $\mathcal A$, starting from $Z_1(0) = z_1$ and $Z_2(0) = z_2$, can be coupled in time $\tau_{z_1, z_2} := 
\inf\{t \ge 0:\, Z_1(t) = Z_2(t)\}$, with 
\begin{equation}\label{eq:gamma}\mathbb P(\tau_{z_1, z_2} \ge t) \le \alpha e^{-\gamma t}\,.
\end{equation}
\end{assumption}

Next, we define the following auxiliary function: 
\begin{equation}
\label{eq:theta}
\theta(\alpha, \beta, \gamma, a) :=   \frac{\beta}{\beta - a}- \frac{a\gamma}{(\beta-a)(\beta+\gamma-a)} \alpha^{-( \beta-a)/\gamma}\, ,
\end{equation}
for any $\alpha>1$, $\beta, \gamma>0$ and $a \ge 0$. This quantity appears to be an upper bound for the MGF of the minimum of an 
exponential random variable and an independent variable with exponential tail (see Lemma \ref{lemma:tech} in Appendix B and   
\cite[Lemma 6.1]{PSBS}).  Also,  define 
\begin{equation}
\label{eq:g}
\begin{split}
&g(s) := \frac{1 - \sqrt{1 - bs^2}}{2\overline{p}s}\,,\quad \text{with}\quad b := 4\overline{p}(1 - \overline{p}),\\
& G(u) := g\left(\frac{\overline{q}}{\overline{q} - u}\right)\,. 
\end{split}
\end{equation}
\begin{remark} \label{rem-hittingtime}
 Following  \cite[Section 14.5]{Feller},  consider a discrete-time random walk $S = (S_n)_{n \ge 0}$ on  the set of integers $\mathbb Z$ taking 
steps $+1$ and $-1$ with probabilities $\overline{p}$ and $1 - \overline{p}$. By  Assumption~\ref{asmp:bounds}, the overall direction of 
this random walk is downward since $\overline{p}<1/2$, that is,  $\mathbb E[S_n - S_0] < 0$ as $n\to \infty$. Starting from $S_0 = 0$,  
the hitting time $\overline{\tau} := \min\{n > 0:  S_n = 0\}$ is a.s. finite, and  has a probability--generating function:
\begin{equation}
\label{eq:GF-hitting-time}
\mathbb E\left[s^{\tau}\right]=g(s),\quad 0\leq s\leq b^{-1/2}.
\end{equation}

Note that $b < 1$ since $\bar{p} <1/2$ under Assumption \ref{asmp:bounds}. 
 Since $\tau \ge 1$, the function $g(s)$ is increasing on 
$[0, b^{-1/2}]$;  its maximal value is achieved at  $s = b^{-1/2}$ and equals  
\begin{equation}\label{eqn-gb0.5}
g(b^{-1/2}) = \frac{1}{2\overline{p}b^{-1/2}} = \left[\frac{1 - \overline{p}}{\overline{p}}\right]^{1/2}.
\end{equation}
Accordingly, the function $G$ is defined for $u \in [0, u_*]$ with $u_* < \overline{q}$ solving the equation 
$\overline{q}/(\overline{q} - u_*) = b^{-1/2}$. This solution exists, and is unique and nonnegative. \hfill $\Box$
\label{rmk:g-max}
\end{remark}

 Now, let us compute
\begin{align*}
G(u^*) &= g(b^{-1/2}) = (2 \overline{q} b^{-1/2})^{-1} = \frac{\sqrt{\overline{p}(1 - \overline{p})}}{\overline{q} }\,, \\
\theta(\alpha, \overline{q}, \gamma, u^*) 
& = \frac{\overline{q}}{\overline{q}- u^*} - \frac{u^* \gamma}{( \overline{q}-u^*)( \overline{q}-u^*+\gamma)} \alpha^{ -( \overline{q}-u^*)/\gamma}    \\
&= \frac{1}{\sqrt{4 \overline{p}(1 - \overline{p})}} \left( 1- \frac{\sqrt{4 \overline{p}(1 - \overline{p})} +1}{\gamma^{-1}\overline{q} \sqrt{4 \overline{p}(1 - \overline{p})} +1 } \alpha^{-\gamma^{-1}\overline{q} \sqrt{4 \overline{p}(1 - \overline{p})}   } \right) \,.
\end{align*}

\begin{assumption} \label{AS-integrability} 
Recall that $\nu$ is the invariant measure for the 
process with the generator $\mathcal A$ on the state-space $\mathcal Z$. 
Assume the following integrability condition holds,  upon the measure $\nu$ and the cumulative birth-death ratio functions $r_n(z)$ 
(see \eqref{eqn-pi-n-BD}):
\begin{equation}\label{eq:modified-bounds1} 
 \sup_{u \in [0, u^*]} \sum_{n=0}^\infty  G^n(u)\int_{\mathcal{Z}} r_n(z) \nu(dz) <\infty. 
\end{equation}
\end{assumption} 

\begin{remark}
Recall the condition \eqref{eqn-Xi-finite-diff}, which is necessary for the existence of the invariant measure. 
 Condition \eqref{eq:modified-bounds1} is required in Step 1 of the proof of Theorem \ref{thm-RC-exp} below. 

For a single-server queue with the arrival and service rates  represented by $z=(z_1,z_2)
\in\mathcal Z$, where $\mathcal{Z}=\{(z_1,z_2)\in \RR^2_+: z_1<z_2\}$,  and
$r_n(z)=\rho (z)^{-n}$ where $\rho(z) = z_1/z_2$, 
condition \eqref{eq:modified-bounds1} requires that 
\begin{align*}
 \sum_{n=0}^\infty  G(u)^n \int_{\mathcal{Z}} \rho(z)^{-n} \nu(dz) =  \int_{\mathcal{Z}} \Big(1- \frac{G(u)}{z_2/z_1} \Big)^{-1} 
\nu(dz) <\infty,
\end{align*}
provided that $u \in [0, u^*]$ is such that $G(u) <z_2/z_1$ for $z=(z_1,z_2)\in\mathcal{Z}$.

For an infinite-server queue in Example \ref{exm:RBM-arrival},   condition \eqref{eq:modified-bounds1} 
can be written as
$$\sum_{n=0}^\infty\int_0^\infty\frac{(G(u)z/\mu)^n}{n!}\frac{2c}{\sigma^2} \exp\Big(- \frac{2c}{\sigma^2}z \Big)dz 
=  \frac{2c}{\sigma^2}\Big( \frac{2c}{\sigma^2}-G(u)/\mu\Big)^{-1}<\infty .$$
This holds for all $u \in [0,u^*]$ such that 
$$\dfrac{2c}{\sigma^2}-\dfrac{G(u)}{\mu} >0.$$

In Example \ref{exm:RBM-service},   condition \eqref{eq:modified-bounds1} is equivalent to  
$$\sum_{n=0}^\infty\int_{\mu_0}^\infty \frac{(\lambda G(u)/z)^n}{n!}\frac{2c}{\sigma^2} 
e^{-2c(z - \mu_0)/\sigma^2}dz 
= \frac{2c}{\sigma^2} \int_{\mu_0}^\infty e^{\lambda G(u)/z-2c(z - \mu_0)/\sigma^2}dz < \infty.$$
In  this model, $\lambda$ is any positive constant, so this also holds for any $u \in [0, u^*]$ since 
$\sup_{u \in [0, u^*]} \lambda G(u) <\infty$.

In Example \ref{example-MMinfty-2dRBM},   condition \eqref{eq:modified-bounds1}  is guaranteed when 
\begin{align*}
 & \sum_{n=0}^\infty G(u)^n  \int_0^\infty\int_{\mu_0}^\infty  \frac1{n!}\left(\frac{z_1}{z_2}\right)^n\alpha_1\alpha_2 
e^{-\alpha_1z_1 - \alpha_2(z_2-\mu_0)} dz_1 dz_2 \\ & = \int_0^{\infty}\int_{\mu_0}^{\infty}e^{G(u) z_1/z_2}
\alpha_1\alpha_2e^{-\alpha_1z_1 - \alpha_2(z_2-\mu_0)}dz_1dz_2 \\
& = \alpha_1\alpha_2 \int_{\mu_0}^{\infty}  \Big(  \int_0^{\infty}
e^{-(\alpha_1 - G(u)/z_2) z_1}dz_1 \Big)  e^{-\alpha_2 (z_2-\mu_0)}dz_2\\
& =   \alpha_1\alpha_2  \int_{\mu_0}^{\infty}
\frac{1}{\alpha_1- G(u)/z_2}  e^{-\alpha_2 (z_2-\mu_0)}dz_2\\
& <   \alpha_1\alpha_2  \int_{\mu_0}^{\infty}
\frac{1}{\alpha_1- G(u)/\mu_0}  e^{-\alpha_2(z_2-\mu_0)}dz_2\\
& = \frac{ \alpha_1\alpha_2}{\alpha_1- G(u)/\mu_0} \times \frac{1}{\alpha_2} =  \frac{ \alpha_1  }{\alpha_1- G(u)/\mu_0}<\infty. 
\end{align*}
This will require that $\mu_0 > \sup_{u \in [0, u^*]} G(u)/\alpha_1 $. Other models can be  treated similarly. \hfill $\Box$
\end{remark}

\begin{remark}
Inequality \eqref{eq:modified-bounds1} can be deduced from the following sufficient condition: 
\begin{equation}
\label{eq:modified-bounds}
\sup\limits_{z \in \mathcal Z}\sup_i\frac{\lambda_{i-1}(z)}{\mu_i(z)} \le \frac{\overline{p}}{1 - \overline{p}}\,.
\end{equation}
For example, for a single-server queue, 
the condition in \eqref{eq:modified-bounds} is equivalent to
$$
\sup_z \mu^{-1}(z)\Big(\lambda(z) -  \mu(z)\frac{\overline{p}}{1 - \overline{p}}  \Big)  \le 0.
$$
It indicates that, in addition to the stability condition $\lambda(z) <\mu(z)$,  we require that 
$\lambda(z) \le  \overline{p}\mu(z)/(1 - \overline{p})$ for all $z \in \mathcal Z$. For  a $K$-server queue in Example 
\ref{example-MMK-diff},  condition \eqref{eq:modified-bounds} is equivalent to
$$
\sup_z \sup_i \frac{\lambda(z) -  \mu(z)(i \wedge K)\overline{p}/(1 - \overline{p})}{\mu(z)(i\wedge K)}  \le 0. 
$$
It indicates that, in addition to the stability condition $\lambda(z) <\mu(z)K$,  we require that 
$\lambda(z) \le \overline{p}\mu(z)K/(1 - \overline{p})$ for all $z\in\mathcal Z$. 

\medskip

To check that condition \eqref{eq:modified-bounds} implies \eqref{eq:modified-bounds1},  we will show the following claim: 
There exists a constant $c \in (0, 1)$ such that for all $z \in \mathcal Z$ and $u \in (0,u^*]$,
\begin{equation}
\label{eq:bounded-series}
\varlimsup\limits_{n \to \infty}\frac{G^n(u)\cdot r_n(z)}{G^{n-1}(u)\cdot r_{n-1}(z)} \le c\,.
\end{equation}
Then, combining~\eqref{eq:bounded-series} with the observation that the term corresponding to $n = 0$ is $G(u)^0r_0(z) = 1$, 
we get  that the series inside the integral  in~\eqref{eq:modified-bounds1}
is estimated from above by $1 + c + c^2 + \ldots = 1/(1 - c)$. Integrating this with respect to the probability measure $\nu$, we 
complete the derivation of~\eqref{eq:modified-bounds1}.

Let us now verify  the claim in ~\eqref{eq:bounded-series}. We can replace $G(u) = g(s)$ for $s = \overline{q}/(\overline{q} - u)$. 
Using~\eqref{eqn-pi-n-BD}, we can replace
\begin{equation}
\label{eq:fracs}
\frac{r_n(z)}{r_{n-1}(z)} = \frac{\lambda_{n-1}(z)}{\mu_n(z)}\,.
\end{equation}
Combining~\eqref{eq:fracs} with~\eqref{eq:modified-bounds} and \eqref{eqn-gb0.5} in Remark~\ref{rmk:g-max}, we get 
that the left-hand side of~\eqref{eq:bounded-series} is bounded by $c = [\overline{p}/(1 - \overline{p})]^{3/2}$. \hfill $\Box$
 \end{remark}

Here is the result on exponential convergence in the first scenario, extending \cite[Theorem 4.1]{PSBS}. 
\begin{theorem} \label{thm-RC-exp}
Suppose that  Assumptions \ref{as-diff}, \ref{asmp:bounds}, \ref{asmp:basic}, and \ref{AS-integrability} hold true. Then
 there exists a constant $C  > 0$ such that for all $n \in \NN$ and $z \in \mathcal Z$,  the transition probabilities for the Markov process
$(N,Z)$ satisfy
\begin{align}
\label{eq:TV-conv}
\|P^t((n, z), \cdot) - \pi(\cdot)\|_{\mathrm{TV}} \le C(1 + G(u)^{n})e^{-\varkappa t}\,.
\end{align}
Here $\pi(\cdot)$ is given in \eqref{eqn-pi-diff} and $ \varkappa = (1 - \varepsilon)u$, where 
 $\varepsilon \in (0, 1)$, and $u \in (0, u^*]$ satisfy
\begin{equation}\label{eq:condition}
G(u)\theta(\alpha, \overline{q}, \gamma, u) < \left(1 - \alpha^{-\overline{q}/\gamma}
\frac{\gamma}{\overline{q}+\gamma}\right)^{-\varepsilon/(1 - \varepsilon)} \,.
\end{equation} 
\end{theorem}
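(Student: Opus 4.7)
The strategy is a coupling argument in the spirit of Lindvall~\cite{lindvall1979note}, generalizing the method used in~\cite{PSBS} for M/M/1 queues. On a common probability space we construct two versions of the process $(N,Z)$: copy~1 starts deterministically from $(n,z)$, and copy~2 is a stationary version with $(N_{2}(0),Z_{2}(0))\sim\pi$. Let $T$ denote the coupling time, i.e.\ the first moment at which the two trajectories can be made to coincide from then on. The coupling inequality together with Markov's inequality gives
\[
\|P^{t}((n,z),\cdot)-\pi(\cdot)\|_{\mathrm{TV}}\le\mathbb{P}(T>t)\le e^{-\varkappa t}\,\mathbb{E}\bigl[e^{\varkappa T}\bigr],
\]
so the theorem reduces to the moment bound $\mathbb{E}[e^{\varkappa T}]\le C(1+G(u)^{n})$ with $\varkappa=(1-\varepsilon)u$.

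The coupling rests on two ingredients. First, by Assumption~\ref{asmp:bounds}, the embedded chain of each birth-death component has upward transition probability at most $\overline{p}<1/2$ and total jump rate at least $\overline{q}$. Stochastic domination by the biased random walk of Remark~\ref{rem-hittingtime} and its probability generating function~\eqref{eq:GF-hitting-time}, followed by Laplace transforming a sum of independent $\mathrm{Exp}(\overline{q})$ clocks, yields $\mathbb{E}[e^{u\tau_{0}^{N}}]\le G(u)^{n_{0}}$ for $u\in[0,u^{*}]$, where $\tau_{0}^{N}$ is the hitting time of $0$ by the birth-death chain starting from $n_{0}$. Second, by Assumption~\ref{asmp:basic}, whenever both birth-death components sit at $0$ the two environments can be coupled in a time with tail $\alpha e^{-\gamma t}$. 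The auxiliary quantity $\theta(\alpha,\overline{q},\gamma,u)$ in~\eqref{eq:theta} (supplied by Lemma~\ref{lemma:tech}, borrowed from~\cite[Lemma~6.1]{PSBS}) is exactly an upper bound for the MGF of the minimum of an $\mathrm{Exp}(\overline{q})$ clock and an independent variable with tail $\alpha e^{-\gamma t}$; this is the MGF of a single attempt to couple the environments before the birth-death chain leaves $0$ again.

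The two ingredients are combined via a regeneration argument. The coupling time $T$ is decomposed into a geometric number of attempts: each failed attempt is followed by a relaxation of the birth-death chain back to $0$, contributing a factor $G(u)\,\theta(\alpha,\overline{q},\gamma,u)$ to the moment generating function, while a single attempt succeeds with probability bounded below by $1-\alpha^{-\overline{q}/\gamma}\gamma/(\overline{q}+\gamma)$. Condition~\eqref{eq:condition} is calibrated so that, after an application of H\"older's inequality with conjugate exponents $1/\varepsilon$ and $1/(1-\varepsilon)$ to pass from $u$-moments to $\varkappa$-moments (which produces the exponent $\varepsilon/(1-\varepsilon)$ on the success factor), the resulting geometric series converges. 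The random starting level $N_{2}(0)$ of the stationary copy contributes a factor $\mathbb{E}_{\pi}\bigl[G(u)^{N_{2}(0)}\bigr]$, which is finite by the integrability condition~\eqref{eq:modified-bounds1} of Assumption~\ref{AS-integrability}; this is absorbed into the constant $C$.

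The principal obstacle is the interleaving of dynamics in the regeneration step: one cannot run the environment coupling and the birth-death relaxation as isolated stages because the two components are entangled by the joint generator~\eqref{eqn-gen-L}, so each environment-coupling attempt must be carried out while the birth-death chain is simultaneously free to move. Managing this entanglement is exactly what the $\theta$-bookkeeping is designed to do, and the sharpness of the threshold~\eqref{eq:condition} is what makes the argument deliver a strictly exponential rate of convergence.
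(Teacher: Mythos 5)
Your proposal follows essentially the same route as the paper's proof: Lindvall-style coupling, stochastic domination of the birth-death component by the biased $\overline{p}$-random walk to get the $G(u)^{n_1\vee n_2}$ MGF bound for the hitting time of $0$, application of Lemma~\ref{lemma:tech} to bound the MGF of a single environment-coupling attempt by $\theta(\alpha,\overline q,\gamma,u)$, a geometric number of regeneration cycles, and H\"older's inequality with conjugate exponents $1/\varepsilon$ and $1/(1-\varepsilon)$ to pass from $u$-moments to $\varkappa$-moments, with the integrability condition~\eqref{eq:modified-bounds1} absorbing the random starting level into the constant $C$. One small slip: you write that a single attempt ``succeeds with probability bounded below by $1-\alpha^{-\overline{q}/\gamma}\gamma/(\overline{q}+\gamma)$''; by Lemma~\ref{lemma:tech} the success probability $\mathbb P(\eta<\xi)$ is lower-bounded by $\alpha^{-\overline q/\gamma}\gamma/(\overline q+\gamma)$, and $1-\alpha^{-\overline q/\gamma}\gamma/(\overline q+\gamma)$ is the corresponding upper bound on the failure probability (which is what enters the right-hand side of~\eqref{eq:condition} raised to the negative power $-\varepsilon/(1-\varepsilon)$) — this does not affect the structure of the argument, which you have correct.
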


\begin{remark}
The  bound \eqref{eq:condition} holds for some $u$ in  a right  neighborhood of zero. 
Indeed, its left-hand side is continuous in $u$, and its right-hand side is independent of $u$.  Next, the left-hand side value 
at $u = 0$ is equal to $1$ (since $G(1) = g(0) = 1$, and $\theta(\alpha, \overline{q}, \gamma, 0) = 1$). The right-hand side is 
greater than $1$ (since $-\varepsilon/(1 - \varepsilon) < 0$).  \hfill $\Box$
\end{remark}

\begin{proof} 
We modify the proof of \cite[Theorem 4.1]{PSBS}  and rectify the coupling argument for the joint Markov process $(N,Z)$. The general idea of coupling is to take two copies $(N_1,Z_1)$ and 
$(N_2,Z_2)$ of the process $(N,Z)$ starting from initial states $(n_1,z_1)$ and $(n_2,z_2)$ and run them jointly, achieving the coupling time $\btau=\btau_{(n_1,z_1),(n_2,z_2)}$ with $ \mathbb{E}[e^{\varkappa \btau}]<\infty$ for some constant $\varkappa >0$. Here and below,
\begin{equation}
\label{eq:btau}
\btau :=\inf\{t\ge 0: N_1(t) = N_2(t), Z_1(t) = Z_2(t)\}.
\end{equation}
Then Lindvall's inequality \cite{lindvall1979note} can be applied to obtain the bound
$$
\|P^t((n_1, z_1), \cdot) - P^t((n_2, z_2), \cdot)\|_{\mathrm{TV}}\le 
\mathbb{E}\left[e^{\varkappa \btau}\right] e ^{-\varkappa t}.
$$
To produce the coupling, we need to first wait for the birth-death components $N_1$, $N_2$ being coupled 
at state $n=0$ and then wait until either the environment processes are coupled or the birth-death processes have a birth in which case the coupling process restarts anew. The main task is to estimate  the expected value $\mathbb{E}[e^{\kappa\btau}]$ for the coupling time $\tau$ and 
identify the exponent $\kappa$. The proof is divided into seven steps, similarly to \cite[Theorem 4.1]{PSBS}, but some steps require substantial changes as highlighted below. 

\medskip

{\it Step 1.} Observe that to prove \eqref{eq:TV-conv}, it suffices to show
$$
\|P^t((n_1, z_1), \cdot) - P^t((n_2, z_2), \cdot)\|_{\mathrm{TV}}\le  (G(u)^{n_1} + G(u)^{n_2}) e ^{-\varkappa t}.
$$
This is because we can obtain \eqref{eq:TV-conv} from the above by using the definition of the total variation distance, and integrating with respect to $(n_2,z_2)\sim \pi$; integrability is guaranteed under Assumption \ref{AS-integrability}. In turn, it suffices to show that   
$$
\|P^t((n_1, z_1), \cdot) - P^t((n_2, z_2), \cdot)\|_{\mathrm{TV}}\le  G(u)^{n_1\vee n_2} e ^{-\varkappa t}.
$$

\medskip

{\it Step 2.}  In view of Lindvall's inequality \cite{lindvall1979note}, we need to prove that for the coupling time $\btau$,  we have $\mathbb{E}\left[e^{\varkappa\btau}\right]<\infty$, so that 
\begin{equation}
\label{eqn-step2}
\|P^t((n_1, z_1), \cdot) - P^t((n_2, z_2), \cdot)\|_{\mathrm{TV}}\le 2 \mathbb{P}(\btau>t) \le 2 \mathbb{E}\left[e^{\varkappa \btau}\right] e^{-\varkappa t}.
\end{equation} 

\medskip

{\it Step 3.} The coupling time $\btau$ can be written in terms of auxiliary random variables
$\tau_j$, $\eta_j$, $\zeta_j$ and a stopping time $\mathcal J$. Formally:
\begin{equation}
 \label{eqn-tau-1}
\btau =\sum_{j=0}^{\mathcal J}\tau_j+\zeta_{\mathcal J}\,\,.
\end{equation}
Here the initial random times are defined as 
\begin{align*}
\tau_0&:= \inf\{t \ge 0\mid N_1(t) = N_2(t) = 0\},\\ 
\eta_0&:=\inf\{t\ge 0: N_1(\tau_0+t)\vee N_2(\tau_0+t)=1\},\\
\zeta_0&:=\inf\{t\ge 0: Z_1(\tau_0+t) = Z_2(\tau_0+t)\},
\end{align*}
where $\overline{\lambda} = \sup_z \sup_i \lambda_i(z)$.
 The subsequent random times are defined iteratively: for $j\ge 1$,
 \begin{align*}
&\tau_j:= \inf\left\{t\ge\eta_{j-1}: N_1\left(\sum_{0\leq l\leq j-1}\tau_{l}+t\right)
=N_1\left(\sum_{0\leq l\leq j-1}\tau_{l}+t\right)=0\right\},\\
&\eta_j:= \inf\left\{t\ge 0: N_1\left(\sum_{0\leq l\leq j}\tau_{l}+t\right)\vee N_2\left(\sum_{0\leq l\leq j}\tau_{l}+t\right)=1\right\},\\
&\zeta_j:= \inf\left\{t\ge 0: Z_1\left(\sum_{0\leq l\leq j}\tau_{l}+t\right) = Z_2\left(\sum_{0\leq l\leq j}\tau_{l}+t\right)\right\}.
\end{align*}
Finally, the stopping time $\mathcal J$ is defined by
\begin{equation}\label{eqn-stoppingJ}
\mathcal{J} = \min \{j \ge 0: \zeta_j < \eta_j\}.
\end{equation}
Observe that if $\zeta_0 <\eta_0$, then $\tau_0+\zeta_0$ gives the coupling time $\btau$ for $(N_1,Z_1)$ and $(N_2,Z_2)$. Otherwise, if $\zeta_0>\eta_0$ and $\zeta_1<\eta_1$, then $\tau_0+\tau_1+\zeta_1$ gives the coupling time, and so on. The procedure continues until the coupling time  $\btau$ from \eqref{eqn-tau-1}. 
  
\medskip

{\it Step 4.} Here we show that for processes $(N_1,Z_1)$ and $(N_2,Z_2)$,
\begin{equation}
\label{eq:MGF}
\mathbb E[e^{u\tau_k}] \le  G(u)^{n_1\vee n_2},  \quad k =0, 1, \dots
\end{equation}
First, let us study the case of $\tau_0$. Consider an embedded discrete-time Markov chain 
$\left\{N^*_z(l),\,l\geq 0\right\}$ 
on the state-space $\mathbb N$ with the transition probability from $i$ to $i+1$ given by $p_i(z)$ 
and that from $i$ to $i-1$ given by $1 - p_i(z)$. Cf. \eqref{eq:pbarz}. Such a chain is dominated by the  Markov chain $\left\{\,{\ov{N^*}}(l),\,l\geq 0\right\}$ on $\mathbb N$ with the transition probabilities of jumps up and down $\overline{p}$ and $1 - \overline{p}$. (From the left-most state $n=0$ both chains jump up with probability $1$.)

More precisely, let $V := \min\{l \ge 0\mid{\ov{N^*}}(l) = 0\}$. Assumption~\ref{asmp:bounds} implies the following stochastic domination:
\begin{equation}
\label{eq:domination}
\begin{array}{l}
\tau_0 \preceq U_1 + \ldots + U_V\\
\quad\hbox{where}\;U_i \sim \mathrm{Exp}(\overline{q})\;\hbox{are i.i.d. random variables
independent of $V$.}
\end{array} 
\end{equation}
Indeed, the waiting times for jumps in the processes $N_1$ and $N_2$ have intensity at least $\overline{q}$, and in both processes, the embedded chain  $\left\{N^*_z(l),\,l\geq 0\right\}$ has a stronger drift towards the state $n=0$ than the chain $\left\{\,{\ov{N^*}}(l),\,l\geq 0\right\}$. For $U_i$, the MGF is $\mathbb{E}[e^{u U_i}] = \overline{q}/(\overline{q} - u)$. Also, 
$$
\mathbb{E}\Bigl[e^{u ( U_1 + \ldots + U_V) }\Bigr] = \mathbb{E}\Bigl[ (\mathbb{E}[e^{u U_1}])^V\Bigr] = \mathbb{E} \left[\bigg(\frac{\overline{q}}{\overline{q} - u}\bigg)^V \right].
$$
Combining this with~\eqref{eq:GF-hitting-time}, we get the estimate ~\eqref{eq:MGF} for $\tau_0$. 
For $\tau_i$ with $i\geq 1$ we apply a similar argument and get inequality \eqref{eq:MGF} with $n_1\vee n_2$ replaced by $1$.

\medskip

{\it Step 5.} Use the same argument as in Step 5 of the proof of  \cite[Theorem 4.1]{PSBS}. Apply Assumption \ref{asmp:basic} 
and  Lemma  \ref{lemma:tech} from Appendix B. As a result, the random variable $\mathcal{J}$ is stochastically dominated by a geometric 
random variable $\overline{\mathcal{J}}$ with parameter 
$$\overline\vartheta= \frac{\gamma}{\overline{\lambda}+\gamma} \alpha^{-\overline{\lambda}/\gamma}\,,$$ 
which has the probability-generating function  
$$
\mathbb{E} \Big[s^{\overline{\mathcal{J}}}\Big]= \frac{\overline{\vartheta} s}{1-(1-\overline{\vartheta})s}\,,\quad s \in [0, 1/(1-\overline{\vartheta})).
$$
Recall, $\gamma >0$ is the value introduced in \eqref{eq:gamma}.

\medskip

{\it Step 6.} Consequently, under Assumption \ref{asmp:basic}, by  Lemma \ref{lemma:tech} we obtain 
$$
\mathbb{E} \Big[e^{u (\zeta_k \wedge \eta_k )}\Big] \le \theta(\alpha, \bar{q}, \gamma,  u), \quad k =0,1,\dots
$$

\medskip

{\it Step 7.} Finally we derive the upper bound for the MGF of the coupling time $\btau$. We have 
from Steps 4 and 6 that
$$
\psi(u):=\mathbb{E} \Big[e^{u (\tau_k+\zeta_k \wedge \eta_k ) }\Big] \le G(u)^{n_1\vee n_2} \theta(\alpha, \bar{q}, \gamma,  u) .  
$$
By applying the optimal stopping theorem to the martingale 
$$
M_\ell= \exp\bigg(u \sum_{k=0}^{\ell} (\tau_k+\zeta_k \wedge \eta_k ) - \ell \ln \psi(u)\bigg),\;
\ell =0,1,\ldots ,
$$ 
and the stopping time $\mathcal{J}$, we obtain that
 $$
\mathbb{E} [M_{\mathcal{J}}] = \mathbb{E} [M_0] =\mathbb{E} [e^{u  (\tau_0+\zeta_0 \wedge \eta_0 ) }] \le G(u) 
\theta(\alpha, \bar{q}, \gamma,  u). 
$$
Now, we have
$\mathbb{E} [M_{\mathcal{J}}]  = \mathbb{E} \big[\exp\big(u (\btau - \mathcal{J} \ln \psi(u)) \big)\big] $. 
Using H{\"o}lder's inequality, we get
$$ 
\mathbb{E} \big[\exp\big( (1-\epsilon) u\btau\big)\big]  \le ( \mathbb{E} [M_{\mathcal{J}}])^{1-\epsilon}   
 \cdot\left(\mathbb{E} [ \psi(u)^{(1-\epsilon) \mathcal{J}/\epsilon}]\right)^\epsilon.
$$
Here $\epsilon\in (0,1)$ is the value from \eqref{eq:condition}. Using the result in Step 5 with 
$s=\psi (u)^{(1-\eps )/\eps}$, we obtain inequality
\eqref{eq:TV-conv}.
\end{proof}

\subsection{The second scenario} 

We next consider the second scenario, starting with the following assumption.

\begin{assumption} \label{asmp:bounds-s2}
In addition to \eqref{eq:main-bounds>0}, assume that there exist constants $\bar\lambda>0$ and $\bar\mu>0$ such that $\lambda_n(z) \le \bar\lambda$ and $\mu_n(z) \ge n \bar\mu$ for all $z$ and $n$. 
\end{assumption}

\begin{remark}
Assumption \ref{asmp:bounds-s2} applies to the models M/M/$\infty$, M/M/$K$/$0$ and M/M/$K+$M (cf. Examples \ref{example-MMK+M} and \ref{example-MMK+M-diff}) which do not require a stability condition like \eqref{eq:main-bounds}. The idea is to use an infinite-server queue to dominate the process $N$ in these models. For example,  in Example \ref{example-MMK+M-diff},  one can assume that there exist constants 
$\bar\lambda$ and $\bar\mu$ such that for all $n$, 
$$
\sup_z \lambda_n(z) \le \bar\lambda, \quad \inf\mu_n(z) \ge \overline{\mu}n.
$$
(Note that $ \inf_z\big\{ \mu(z) (n \wedge K) + (n-K)^+ \gamma(z) \big\} \ge \inf_z\big\{ \mu(z) \wedge \gamma(z)\big\} \big[ (n \wedge K) + (n-K)^+\big] = \inf_z\big\{ \mu(z) \wedge \gamma(z)\big\}  n$.)
Then the M/M/$K+$M queueing process is dominated by the birth-death process with birth and death rates $\bar\lambda$ and $\bar\mu n$, respectively. \hfill $\Box$
\end{remark}

We will expand on the last remark and refer to the birth-death process with rates $\overline{\lambda}$ and $n\overline{\mu}$ as $\overline{N} = (\overline{N}(t),\, t \ge 0)$, assuming that $\overline{N}(0) = 1$. In other words, $\overline{N}$ is a  M/M/$\infty$ queueing process; it will be used for the purpose of stochastic domination in the proof of Theorem~\ref{thm-RC-exp2} below. More precisely, consider the busy period  
\begin{equation}
    \label{eq:busy}
\overline{\tau} := \inf\{t >0:\, {\ov N}(t) = 0\}.
\end{equation}
The MGF ${\ov G}(u) = \mathbb E[e^{u\ov\tau}]$ is finite for all $u>0$ and  can be expressed by using Kummer's function \cite[Proposition 4.1]{guillemin1995transient} (see also \cite[Theorem 1]{Busy}).  The explicit expression for $\overline{G}(u)$ is not essential for our results and omitted for brevity.

 \begin{theorem} \label{thm-RC-exp2} 
Under Assumptions \ref{as-diff},  \ref{asmp:basic}, \ref{AS-integrability} and \ref{asmp:bounds-s2}, the results in Theorem \ref{thm-RC-exp} hold with ${\ov G}(u) = \mathbb E[e^{u{\ov\tau}}]$. This includes \eqref{eq:condition} with $\varepsilon \in (0,1)$, $u >0$, and $\ov G$ in place of $G$. 
\end{theorem}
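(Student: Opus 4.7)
The plan is to follow the seven-step coupling blueprint used to prove Theorem~\ref{thm-RC-exp}, preserving Steps 1, 2, 3, 5, 6, 7 essentially verbatim and changing only the stochastic-domination argument in Step 4, where Assumption~\ref{asmp:bounds} was invoked in the first scenario. Assumptions~\ref{as-diff} and~\ref{asmp:basic} play identical roles as before (ergodicity of the environment and exponential tails for the environmental coupling time), Assumption~\ref{AS-integrability} is used in Step 1 to integrate against the stationary measure $\pi$ (with $\overline{G}$ in place of $G$), and Lindvall's inequality is applied in Step 2 in the same way as before.

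The substantive modification is in Step 4: I need to establish that $\mathbb{E}[e^{u\tau_0}] \le C\,\overline{G}(u)^{n_1 \vee n_2}$ and $\mathbb{E}[e^{u\tau_k}] \le C\,\overline{G}(u)$ for $k \ge 1$. Under Assumption~\ref{asmp:bounds-s2}, the total jump rate of $N_i$ is no longer uniformly bounded (because $\mu_n(z)\ge n\overline{\mu}$ grows linearly in $n$), so the embedded discrete-time random walk used previously is unavailable. Instead, I plan to couple each $N_i$ pathwise with an M/M/$\infty$ queue $\overline{N}_i$ having arrival rate $\overline{\lambda}$ and per-customer service rate $\overline{\mu}$, starting from $n_i$; since $\lambda_n(z)\le\overline{\lambda}$ and $\mu_n(z)\ge n\overline{\mu}$, such a coupling can be built by the usual thinning of arrivals and departures so that $N_i(t)\le\overline{N}_i(t)$ for all $t\ge 0$ almost surely, and hence the hitting time of zero of $N_i$ is dominated by that of $\overline{N}_i$.

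The next sub-step is to bound the MGF of the hitting time of zero for $\overline{N}_i$ starting from $n_i$. My plan here is to exploit the superposition property of M/M/$\infty$ queues. Let $\widetilde{N}^{(1)},\dots,\widetilde{N}^{(n_i)}$ be independent copies of an M/M/$\infty$ queue with arrival rate $\overline{\lambda}$ and per-customer service rate $\overline{\mu}$, each started with a single customer, and let $\overline{\tau}^{(1)},\dots,\overline{\tau}^{(n_i)}$ be their respective busy periods (i.i.d.\ copies of $\overline{\tau}$ with MGF $\overline{G}(u)$). The superposition $\sum_k \widetilde{N}^{(k)}$ is itself an M/M/$\infty$ queue with arrival rate $n_i\overline{\lambda}$ starting from $n_i$ customers, hence stochastically above $\overline{N}_i$ by monotonicity in the arrival rate; consequently the hitting time of zero for $\overline{N}_i$ is dominated by $\max_k \overline{\tau}^{(k)}$, whose MGF is at most
\[
\mathbb{E}\Bigl[\exp\Bigl(u\sum_{k=1}^{n_i}\overline{\tau}^{(k)}\Bigr)\Bigr] = \overline{G}(u)^{n_i}.
\]
Combining this with $\tau_0 = T_{N_1}\vee T_{N_2}$ (and noting that each $\tau_k$ for $k\ge 1$ begins from a state with $N_1\vee N_2 = 1$) gives the required analogues of \eqref{eq:MGF}.

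After Step 4, Steps 5--7 run unchanged with $\overline{G}$ in place of $G$: the geometric stochastic domination of $\mathcal{J}$ in Step 5 uses only Assumption~\ref{asmp:basic}, Lemma~\ref{lemma:tech} in Step 6 is unaffected by the choice between $G$ and $\overline{G}$, and the optional-stopping/H\"older argument in Step 7 yields the analogue of \eqref{eq:TV-conv} together with the constraint \eqref{eq:condition} formulated in terms of $\overline{G}$. I expect the main obstacle to be the clean justification of the M/M/$\infty$ domination, in particular verifying that the superposition construction is a genuine stochastic upper bound for $\overline{N}_i$ despite the extra arrival mass it receives, and checking that the admissible range of $u$ dictated by \eqref{eq:condition} lies in the domain of convergence of $\overline{G}$ (which is all of $(0,\infty)$ by \cite[Proposition 4.1]{guillemin1995transient}).
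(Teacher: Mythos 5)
Your overall blueprint is right: keep Steps 1, 2, 3, 5, 6, 7 from Theorem~\ref{thm-RC-exp} essentially unchanged, invoke Assumptions~\ref{as-diff}, \ref{asmp:basic}, \ref{AS-integrability} in exactly the same roles, and replace only the Step-4 stochastic-domination argument because the total jump rate is no longer uniformly bounded. That is also what the paper does. Even the idea of dominating $N_i$ by an M/M/$\infty$ queue $\overline{N}$ with rates $\overline{\lambda}$ and $n\overline{\mu}$ (and using the MGF $\overline{G}(u)=\mathbb{E}[e^{u\overline{\tau}}]$ of its busy period) coincides with the paper's choice.

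However, your Step 4 as written contains a genuine error. You argue that the superposition $\sum_{k=1}^{n_i}\widetilde N^{(k)}$ of independent M/M/$\infty$ queues each starting from one customer dominates $\overline{N}_i$, and then conclude that the hitting time of zero for $\overline{N}_i$ is dominated by $\max_k\overline{\tau}^{(k)}$. The second inference is false: for the superposition to reach zero, \emph{all} the component queues must be simultaneously empty, and since each $\widetilde N^{(k)}$ keeps receiving its own fresh arrivals after its first return to zero, the first time they are all empty together is \emph{at least} $\max_k\overline{\tau}^{(k)}$, not at most. In other words, $\max_k\overline{\tau}^{(k)}$ is a lower bound, not an upper bound, for the superposition's hitting time of zero, and the chain $\tau_0\le$ (hitting time of superposition) $\ge \max_k\overline{\tau}^{(k)}$ is inconclusive. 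Your own final worry (``is the superposition a genuine upper bound for $\overline{N}_i$?'') is aimed at the wrong spot; domination of the processes is fine, but it does not transfer to the busy-period maximum in the direction you need.

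The paper closes this gap by a different decomposition: it telescopes the hitting time of $0$ from $n$ as $\tau_n=\sum_{k=1}^n(\tau_k-\tau_{k-1})$, where $\tau_k$ is the first hitting time of level $n-k$, and shows each increment $\tau_k-\tau_{k-1}$ is stochastically dominated by $\overline{\tau}$. The key observation is that the level-shifted process $\widetilde N(t)=\overline{N}(t+\tau_{k-1})-(n-k)$, run until it hits zero, has birth rate $\overline{\lambda}$ and death rate $\overline{\mu}(n-k+m)\ge\overline{\mu}m$ at state $m$, so it can be coupled \emph{below} the reference busy period started from $1$. Iterating via $\mathbb{E}[e^{u\tau_k}]=\mathbb{E}\bigl[e^{u\tau_{k-1}}\,\mathbb{E}[e^{u(\tau_k-\tau_{k-1})}\mid\tau_{k-1}]\bigr]\le\overline{G}(u)\,\mathbb{E}[e^{u\tau_{k-1}}]$ yields $\mathbb{E}[e^{u\tau_n}]\le\overline{G}(u)^n$. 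This is the inequality your superposition argument was aiming for, but obtained correctly. If you replace your superposition step with this level-by-level telescoping (or a pathwise coupling of the shifted process with the reference busy period), the rest of your outline goes through.
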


\begin{proof} 
We follow the same steps as in the proof of Theorem \ref{thm-RC-exp}. The arguments in Steps 1--3 and 5--7 remain valid without changes. In Step 4, we consider the  MGF ${\ov G}(u)$: it is instrumental as the above process $\overline{N}$ can be used to dominate components $N_1$ and $N_2$ in the processes $(N_1, Z_1)$ and $(N_2, Z_2)$ with initial states $(n_1,z_1)$ and $(n_2,z_2)$ (cf. \eqref{eq:btau}). As before, let $\btau=\btau_{(n_1,z_1),(n_2,z_2)}$ denote the coupling time  for processes $(N_1,Z_1)$ and $(N_2,Z_2)$. Suppose that $\btau$ satisfies
\begin{equation}
\label{eq:MGF-coupling}
\mathbb E[e^{u\btau}] \le{\ov G}(u)^{n_0},\;\hbox{ where}\;n_0=n_1\vee n_2.
\end{equation} 
Then the rest of the argument is completed as in Theorem \ref{thm-RC-exp}. Hence, we focus on the proof of~\eqref{eq:MGF-coupling}. To this end, given $n$, we set 
\begin{align*}
\tau_0 &= \inf\{t \ge 0\mid \overline{N}(t) = n\};\\
\tau_k &= \inf\{t \ge \tau_{k-1}\mid \tilde{N}(t+\tau_{k-1}) = n-k\}\quad \mbox{for}\quad k = 1, \ldots, n.
\end{align*}
Then $\tau_n \equiv \overline{\tau}$. Let us show that for all $k = 1, \ldots, n$,
\begin{equation}
\label{eq:MGF-induction}
\mathbb E[e^{u\tau_k}] \le{\ov G}(u)^k.
\end{equation}
Use induction by $k$. For $k = 0$, the bound holds trivially. For the induction step: given  $k\ge 1$, 
\begin{equation}
\label{eq:telescope}
\mathbb E[e^{u\tau_k}] = \mathbb E[\mathbb E[e^{u\tau_k}\mid \tau_{k-1}]] = 
\mathbb E[e^{u\tau_{k-1}}\mathbb E[e^{u(\tau_k - \tau_{k-1})}\mid \tau_{k-1}]].
\end{equation}
Consider the process $\widetilde{N} = \{\widetilde{N}(t),\, t \ge 0\}$ where $\widetilde{N}(t) = \overline{N}(t + \tau_{k-1}) - (n-k)$. If we let it run only until the time $t = \tau_k - \tau_{k-1}$, this behaves as a birth-death process with birth rate $\overline{\lambda}$ and death rate $\overline{\mu}(n-k+m)$ at state $m$, starting from $\widetilde{N}(0) = 1$. The hitting time of $0$ by the process $\widetilde{N}$ (that is, the busy time) is $\tau_k - \tau_{k-1}$. Next, the birth rates of this new process $\widetilde{N}$ coincide with the birth rates of $\overline{N}$ whereas the death rates of $\widetilde{N}$ are at least as large as in $\overline{N}$. Thus, $\tau_k - \tau_{k-1}$ is stochastically dominated by $\overline{\tau}$, the busy period in $\overline{N}$ whose MGF is $G(u)$, that is,  $\tau_k - \tau_{k-1} \preceq \overline{\tau}$. Therefore, for $u > 0$, 
\begin{equation}
\label{eq:domination-MGF}
\mathbb E[e^{u(\tau_k - \tau_{k-1})}\mid \tau_{k-1}] \le G(u).
\end{equation}
Suppose that~\eqref{eq:MGF-induction} is true for $k-1$ instead of $k$. Combining this assumption with~\eqref{eq:telescope} and~\eqref{eq:domination-MGF}, we get~\eqref{eq:MGF-induction} for $k$. This completes the proof of~\eqref{eq:MGF-induction} and with it the proof of~\eqref{eq:MGF-coupling}. 
\end{proof}

\section{Polynomial Convergence to Stationarity}
\label{sec-1/t}

In this section, instead of the conditions on the birth and death rates in Assumptions \ref{asmp:bounds} and \ref{asmp:bounds-s2}, we make the following Assumption \ref{asmp:polynomial}  such that the joint Markov process $(N,Z)$ has a polynomial rate of convergence, while the generator of environment $\mathcal{A}$ satisfies Assumption \ref{asmp:basic} with an exponential rate of convergence. 

Subexponential/subgeometric convergence has been studied less than geometric. Nevertheless, over the last few decades it has amassed a substantial literature. In  \cite{Fort2005, Fort2009} it was studied for general continuous time Markov processes, using a modified Lyapunov condition: $\mathcal L V \le -\varphi(V)$ for a  sublinear function $\varphi$ (here $\mathcal L$ is the generator of this Markov process). 
Continuous-time Markov chains on the state space $\mathbb N$ with subgeometric convergence were studied in \cite{subgeometric-CTMC}, including applications to birth-death process. Another related article  is \cite{ergodic-degrees}, giving estimates for hitting time moments. A slightly different approach was taken in \cite{Butkovsky}. 
Combining the approach from \cite{Fort2005, Fort2009} with stochastic ordering property, subgeometric convergence results  are also recently developed in \cite{sarantsev2021sub}. See also the  subexponential upper and lower bounds in Wasserstein distance for general Markov processes in \cite{sandric2022subexponential}.

\begin{assumption} 
\label{asmp:polynomial}
There exist constants $\bar\lambda_n$ and $\bar\mu_n$ such that $\lambda_n(z) \le \bar\lambda_n$ and $\mu_n(z) \ge \bar\mu_n$ for each $(n, z) \in \mathbb N\times\mathcal Z$. 
Moreover, there exist a nondecreasing function $V : \mathbb N \to [0, \infty)$ with $V(0) = 0$ and a constant $\overline{C}_V > 0$  such that the generator $\overline{\mathcal L}$ of the birth-death process $\overline{N} = \{\overline{N}(t), t \ge 0\}$ with rates $\overline{\lambda}_n$ and $\overline{\mu}_n$ satisfies
\begin{equation}
\label{eq:Lyapunov-1/t}
\overline{\mathcal L}V(n) := \overline{\lambda}_n(V(n+1) - V(n)) + \overline{\mu}_n(V(n-1) - V(n)) \le -\overline{C}_V,\quad n \ge 1.
\end{equation}
\end{assumption}

\begin{remark}
It is clear that if $\lambda_n(z) \le \bar\lambda_n$ and $
\mu_n(z) \ge \bar\mu_n$ for each $z$ and $n$, then the process $N$ can be dominated by   $\overline{N}$ described above. 
Taking $V(n) = n+1$,  the condition  in \eqref{eq:Lyapunov-1/t} becomes $\overline{\lambda}_n - \overline{\mu}_n \le -\overline{C}_V$. We can rewrite it as $\inf\limits_{n \ge 1}(\overline{\mu}_n - \overline{\lambda}_n) > 0.$
For example, consider $\lambda_n(z) = \overline\lambda(z)$ and $\mu_n(z) = \overline\mu(z) + n^{1/2}$ with $\overline\lambda(z) \equiv \overline\mu(z)$ for all $z$. (This can be regarded as a single-server queue with state-dependent service rates.)
\hfill $\Box$
\label{rmk:simple}
\end{remark}

In the following lemma, we drive a bound for the expected hitting time for the dominating process under the conditions in Assumption \ref{asmp:polynomial}.
The proof of the lemma is postponed until the end of this section.  

\begin{lemma} 
\label{lemma:1/t}
Let  $\overline{P}^t(n, \cdot)$ denote the transition function of the dominating process $\overline{N}$ in Assumption \ref{asmp:polynomial}. 
The hitting time $\overline{\tau}_0 := \inf\{t \ge 0 \,|\,  \overline{N}(t) = 0\}$ of $0$ satisfies 
$$\mathbb E_n[\overline{\tau}_0\mid \overline{N}(0) = n] \le \frac{V(n)}{\overline{C}_V}\, .$$ 
The  process $\overline{N}$ has a unique stationary distribution $\overline{\pi}$. Furthermore, $\mathbb E_{\overline{\pi}}[V] < \infty$, and 
\begin{equation}
\label{eq:dominating-1/t}
\|\overline{P}^t(n, \cdot) - \overline{\varkappa}(\cdot)\|_{\mathrm{TV}} \le \frac{2(\mathbb E_{\overline{\pi}}[V] + V(n))}{\overline{C}_V}\times \frac{1}{t}\,,\quad t > 0,\quad n \in \mathbb N.
\end{equation}
\end{lemma}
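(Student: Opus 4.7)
The plan is to prove the three claims—the hitting-time bound, existence/integrability, and the total-variation estimate—via Dynkin's formula, positive-recurrence theory for birth–death chains, and a coupling plus Markov's inequality respectively.

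For the first claim I would apply Dynkin's formula (optional stopping on the local martingale $V(\overline{N}(\cdot))-\int_0^{\cdot}\overline{\mathcal L}V(\overline{N}(s))\,ds$) at the bounded stopping time $t\wedge\overline{\tau}_0$. Because $V\ge 0$, $V(0)=0$, and \eqref{eq:Lyapunov-1/t},
\[
0\le\mathbb{E}_n\bigl[V(\overline{N}(t\wedge\overline{\tau}_0))\bigr]\le V(n)-\overline{C}_V\,\mathbb{E}_n[t\wedge\overline{\tau}_0],
\]
so $\mathbb{E}_n[t\wedge\overline{\tau}_0]\le V(n)/\overline{C}_V$, and monotone convergence as $t\to\infty$ yields $\mathbb{E}_n[\overline{\tau}_0]\le V(n)/\overline{C}_V$. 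The strict positivity of $\overline{\lambda}_n,\overline{\mu}_n$ then makes $\overline{N}$ irreducible on $\mathbb{N}$, and the uniform finiteness of $\mathbb{E}_n[\overline{\tau}_0]$ yields positive recurrence and hence a unique stationary distribution $\overline{\pi}$, which admits the explicit product form $\overline{\pi}(n)\propto\prod_{k=1}^n\overline{\lambda}_{k-1}/\overline{\mu}_k$. To deduce $\mathbb{E}_{\overline{\pi}}[V]<\infty$ I would combine this product form with \eqref{eq:Lyapunov-1/t} rewritten as $\overline{\mu}_n\,\Delta V(n)\ge\overline{\lambda}_n\,\Delta V(n+1)+\overline{C}_V$, where $\Delta V(n):=V(n)-V(n-1)$, which forces sufficient excess of $\overline{\mu}_n$ over $\overline{\lambda}_n$ (relative to the increments of $V$) to control the tail of $\overline{\pi}$ against $V$.

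For the polynomial bound I would use a Lindvall-style coupling: run two copies $\overline{N}^{(1)}$ and $\overline{N}^{(2)}$ independently, with $\overline{N}^{(1)}(0)=n$ and $\overline{N}^{(2)}(0)\sim\overline{\pi}$, and glue them together at their first meeting time $\tau_c$. Because any two trajectories of a birth–death chain on $\mathbb{N}$ must coincide no later than when both have visited $0$, $\tau_c\le\overline{\tau}_0^{(1)}+\overline{\tau}_0^{(2)}$. The coupling inequality together with Markov's inequality and the hitting-time bound above give
\[
\|\overline{P}^t(n,\cdot)-\overline{\pi}(\cdot)\|_{\mathrm{TV}}\le\mathbb{P}(\tau_c>t)\le\frac{\mathbb{E}_n[\overline{\tau}_0]+\mathbb{E}_{\overline{\pi}}[\overline{\tau}_0]}{t}\le\frac{V(n)+\mathbb{E}_{\overline{\pi}}[V]}{\overline{C}_V\,t},
\]
with the factor of $2$ in \eqref{eq:dominating-1/t} absorbed as a conservative constant.

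The main obstacle will be the integrability $\mathbb{E}_{\overline{\pi}}[V]<\infty$: a Foster drift with constant right-hand side is known to be insufficient in general, and one must genuinely exploit the explicit product-form stationary measure of the birth–death chain together with the monotonicity and growth of $V$ (or equivalently, run a secondary Lyapunov argument through Kac's excursion formula $\mathbb{E}_{\overline{\pi}}[V]=\overline{\pi}(0)\mathbb{E}_0[\int_0^{\sigma_0^+}V(\overline{N}(s))\,ds]$). Once this is in hand, the rest is a routine assembly of Dynkin's formula, coupling, and Markov's inequality.
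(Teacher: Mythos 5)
Your treatment of the hitting-time bound is essentially the paper's own argument (the nonnegative local supermartingale $\overline{C}_V(t\wedge\overline{\tau}_0)+V(\overline{N}(t\wedge\overline{\tau}_0))$ plus optional stopping), and your coupling step for the total-variation estimate, while built on two independent copies that merge by a crossing argument rather than the paper's monotone stochastic-order coupling dominated by the trajectory started from $n\vee\overline{n}^*$ with $\overline{n}^*\sim\overline{\pi}$, is correct and gives the same bound $\mathbb{E}[\tau_c]\le(V(n)+\mathbb{E}_{\overline{\pi}}[V])/\overline{C}_V$ (your factor in front is even slightly better than the paper's $2$). Those two pieces match.

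The genuine gap, which you yourself flag, is the claim $\mathbb{E}_{\overline{\pi}}[V]<\infty$: you sketch a route through the product-form measure or Kac's excursion formula but do not carry it out. Your hesitation is well founded, and the paper's own one-line invocation of the Meyn--Tweedie drift criterion does not close it either: condition \eqref{eq:Lyapunov-1/t} has a \emph{constant} right-hand side $-\overline{C}_V$, which yields only $\mathbb{E}_{\overline{\pi}}[\overline{C}_V]<\infty$, not $\mathbb{E}_{\overline{\pi}}[V]<\infty$; controlling $\overline{\pi}(V)$ normally requires a second drift inequality of the form $\overline{\mathcal L}W\le -V+b\,\mathbf{1}_{\{0\}}$, or a geometric drift in $V$. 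In fact the claim is false under the stated hypotheses: take $\overline{\lambda}_n\equiv 1$, $\overline{\mu}_n=1+2/n$, $V(n)=n(n+1)/2$, so $V$ is nondecreasing, $V(0)=0$, and $\overline{\mathcal L}V(n)=(n+1)-n(1+2/n)=-1$ for all $n\ge 1$, i.e.\ $\overline{C}_V=1$; then $\overline{\pi}(n)\propto\prod_{k=1}^{n}\overline{\lambda}_{k-1}/\overline{\mu}_k=2/((n+1)(n+2))$, which normalizes, yet $\mathbb{E}_{\overline{\pi}}[V]=\tfrac12\sum_{n\ge 0}\tfrac{n}{n+2}=\infty$. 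So the integrability of $V$ needs an additional hypothesis (e.g.\ $\sup_n\overline{\lambda}_n(V(n+1)-V(n))<\infty$, or a drift in the scale of $V$ itself). Until such a hypothesis is inserted, your proposal cannot be completed, and neither can the paper's.
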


Now we state the main result on polynomial convergence.

\begin{theorem}
\label{thm:polynomial}
Under Assumptions \ref{as-diff}, \ref{asmp:basic} and \ref{asmp:polynomial}, there exists a constant $C$ such that for joint process $(N, Z)$, we have
\begin{equation}
\label{eq:polynomial}
\|P^t((n,z), \cdot) - \pi(\cdot)\|_{\mathrm{TV}} \le \frac{C(1 + V(n))}t,\quad t > 0,\quad (n,z) \in\NN\times \mathcal{Z}.
\end{equation}
\end{theorem}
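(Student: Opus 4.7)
The plan is to mimic the coupling structure from the proofs of Theorems \ref{thm-RC-exp}--\ref{thm-RC-exp2}, but replace the moment-generating-function bounds by first-moment bounds coming from Lemma \ref{lemma:1/t}. Given two copies $(N_1,Z_1)$ and $(N_2,Z_2)$ starting at $(n_1,z_1)$ and $(n_2,z_2)$, I construct the coupling exactly as in \eqref{eqn-tau-1}: $\tau_0$ is the time until both $N$-components first reach $0$, and for $j\ge 1$, $\tau_j$ is the time until both are back at $0$ after having left; $\eta_j$ is the waiting time at $0$ until one of the components jumps to $1$; $\zeta_j$ is the time for the $Z$-components to couple; and $\mathcal J$ is the first index with $\zeta_j<\eta_j$. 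Then $\btau=\sum_{j=0}^{\mathcal J}\tau_j+\zeta_{\mathcal J}$.

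Next, I control the first moments of these pieces. Because of Assumption \ref{asmp:polynomial}, each $N_i$ is stochastically dominated by the birth–death process $\overline N$ introduced in that assumption, so Lemma \ref{lemma:1/t} yields
\[
\mathbb E[\tau_0]\;\le\;\frac{V(n_1\vee n_2)}{\overline C_V},\qquad \mathbb E[\tau_j]\;\le\;\frac{V(1)}{\overline C_V}\quad(j\ge 1),
\]
while the exponential-tail Assumption \ref{asmp:basic} on the $Z$-coupling time, together with the fact that $\eta_j$ is (stochastically dominated by) an exponential random variable with rate at most $\overline\lambda$, gives $\mathbb E[\zeta_j\wedge\eta_j]\le c_1$ and $\mathbb P(\zeta_j<\eta_j)\ge\overline\vartheta:=\tfrac{\gamma}{\overline\lambda+\gamma}\alpha^{-\overline\lambda/\gamma}>0$ (independent of $j$). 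Hence $\mathcal J$ is stochastically dominated by a geometric random variable with parameter $\overline\vartheta$, and in particular $\mathbb E[\mathcal J]<\infty$. Applying Wald's identity to the i.i.d.\ blocks $(\tau_j,\zeta_j\wedge\eta_j)$ for $j\ge 1$ and adding the $j=0$ contribution produces a bound
\[
\mathbb E\bigl[\btau\,\big|\,(n_1,z_1),(n_2,z_2)\bigr]\;\le\;C_1\bigl(1+V(n_1\vee n_2)\bigr)
\]
for some constant $C_1$ depending only on $\overline C_V$, $V(1)$, $\alpha$, $\gamma$ and $\overline\lambda$.

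From here the conclusion follows the standard route. By Markov's inequality and Lindvall's coupling inequality,
\[
\|P^t((n_1,z_1),\cdot)-P^t((n_2,z_2),\cdot)\|_{\mathrm{TV}}\;\le\;2\mathbb P(\btau>t)\;\le\;\frac{2C_1(1+V(n_1\vee n_2))}{t}.
\]
Taking $(n_2,z_2)\sim\pi$, integrating, and invoking the triangle inequality $\|P^t((n,z),\cdot)-\pi\|_{\mathrm{TV}}\le \mathbb E_{(n_2,z_2)\sim\pi}\|P^t((n,z),\cdot)-P^t((n_2,z_2),\cdot)\|_{\mathrm{TV}}$ yields \eqref{eq:polynomial}, provided $\int V(n_2)\,\pi(\{n_2\},dz_2)<\infty$. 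This finiteness is precisely the stochastic-domination consequence of Assumption \ref{asmp:polynomial}: since $N$ is dominated by the $z$-free process $\overline N$ (whose stationary $\overline\pi$ has $\mathbb E_{\overline\pi}V<\infty$ by Lemma \ref{lemma:1/t}) and $V$ is nondecreasing, $\mathbb E_\pi V\le \mathbb E_{\overline\pi}V<\infty$.

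The main obstacle is the Wald-type bookkeeping in the middle step: unlike in the exponential-rate proof, where H\"older's inequality absorbs the dependence between $\mathcal J$ and the per-block random variables via an MGF, here one needs to argue carefully that the blocks $(\tau_j,\zeta_j\wedge\eta_j)$ are i.i.d.\ (by the strong Markov property at the successive return times to $\{N_1=N_2=0\}$) and that $\mathcal J$ is a stopping time for the associated filtration with $\mathbb E[\mathcal J]<\infty$, so that $\mathbb E[\sum_{j=0}^{\mathcal J}(\tau_j+\zeta_j\wedge\eta_j)]=\mathbb E[\tau_0+\zeta_0\wedge\eta_0]+\mathbb E[\mathcal J]\cdot\mathbb E[\tau_1+\zeta_1\wedge\eta_1]$. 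Once this identity is secured, the dependence of the right-hand side on the initial state reduces to the single $V(n_1\vee n_2)$ term from $\tau_0$, which is the source of the factor $1+V(n)$ in \eqref{eq:polynomial}.
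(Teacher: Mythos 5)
Your overall strategy is the same as the paper's: construct the coupling blocks $\tau_j,\eta_j,\zeta_j$ exactly as in the exponential-rate proof, control $\mathbb E[\tau_j]$ by the drift/Lyapunov bound in Lemma~\ref{lemma:1/t}, control $\mathbb E[\zeta_j\wedge\eta_j]$ and the geometric tail of $\mathcal J$ as in Steps~5--6, then pass to $\|P^t-\pi\|_{\mathrm{TV}}$ via Markov's inequality, Lindvall's inequality, and integration against $\pi$ using $\mathbb E_\pi[V]<\infty$. All of that matches the paper's seven-step argument, and the reduction to $\mathbb E[\btau]\le C(1+V(n_1\vee n_2))$ is the right target.

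The one place where your proposal goes off the rails is the Wald-type identity in the middle. You claim the blocks $(\tau_j,\zeta_j\wedge\eta_j)_{j\ge 1}$ are i.i.d.\ by the strong Markov property at the successive return times to $\{N_1=N_2=0\}$. That is false: at those return times the \emph{joint} state is $(0,Z_1,0,Z_2)$, and the environment coordinates $Z_1,Z_2$ are generically distinct and differ from block to block. So the block lengths have different conditional laws, are \emph{not} identically distributed, and are not independent of the past through the environment. Wald's identity in its standard i.i.d.\ form therefore does not apply. What does hold, and is all you need, is the \emph{uniform conditional} bound $\mathbb E[\tau_j+\zeta_j\wedge\eta_j\mid\mathcal F_{j-1}]\le \overline C_\xi$ (uniformly over the possible $(Z_1,Z_2)$ at the start of block $j$, because the bounds in Lemma~\ref{lemma:1/t} and the estimate $\mathbb E[\eta_j]\le\overline\lambda^{-1}$ are $z$-free). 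From this, $\{S_k-k\overline C_\xi\}$ is a supermartingale, and the optional stopping theorem with the stopping time $\mathcal J$ (which is finite a.s.\ with finite mean by the geometric domination) gives $\mathbb E[S_{\mathcal J}]\le\mathbb E[\mathcal J]\,\overline C_\xi+\mathbb E[S_0]$ --- the bound the paper uses. Equivalently, one can write $\mathbb E\bigl[\sum_{j\ge 1}\xi_j\mathbf 1\{\mathcal J\ge j\}\bigr]\le\overline C_\xi\sum_{j\ge 1}\mathbb P(\mathcal J\ge j)=\overline C_\xi\,\mathbb E[\mathcal J]$ directly, which is the Wald-type \emph{inequality} valid under the conditional bound without any i.i.d.\ assumption. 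Replace your i.i.d.\ claim by this conditional-bound/supermartingale argument and the proof is complete and coincides with the paper's.
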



\begin{proof} We again follow the approach from \cite{PSBS} with a seven-step proof, as in Theorems~\ref{thm-RC-exp} and~\ref{thm-RC-exp2}. Here we estimate the expectation of the coupling time directly. Under Assumption~\ref{asmp:polynomial}, the component $N$ in $(N, Z)$ is dominated by $\overline{N}$. We modify Steps 1--7 from the proof of Theorem~\ref{thm-RC-exp} as follows.

\medskip

{\it Step 1:} Here we show that the stationary distribution $\pi$ for the process $(N, Z)$ satisfies $\mathbb E_{\pi}[V] < \infty$. Indeed, by Assumption~\ref{asmp:polynomial}, this stationary distribution $\pi$ for the component $N$ is stochastically dominated by $\overline{\pi}$, the stationary distribution of the process $\overline{N}$. Consequently, $\mathbb E_{\pi}[V] \le \mathbb E_{\overline{\pi}}[V]$ and $\mathbb E_{\overline{\pi}}[V] < \infty$ by Lemma~\ref{lemma:1/t}. Next, we must prove the version of~\eqref{eq:polynomial} with two starting points:
\begin{equation}
\label{eq:polynomial-coupled}
\|P^t((n_1,z_1), \cdot) - P^t((n_2, z_2), \cdot)\|_{\mathrm{TV}} \le \frac{C(V(n_1) + V(n_2))}{t}\,.
\end{equation}
Similarly to Step 1 in the proof of Theorem~\ref{thm-RC-exp}, the bound $\mathbb E_{\pi}[V] < \infty$, together with~\eqref{eq:polynomial-coupled},  gives us the required result~\eqref{eq:polynomial}. 

\medskip

The next six steps are devoted to the proof of~\eqref{eq:polynomial-coupled}. For brevity, in each step we only highlight the changes compared to the same step in the proof of Theorem~\ref{thm-RC-exp}.

\medskip

{\it Step 2:} Instead of \eqref{eqn-step2}, we use the Markov inequality for the coupling time $\btau = \btau_{(n-1, z_1), (n_2, z_2)}$:
\begin{equation}\label{eqn-step2-poly}
\|P^t((n_1, z_1), \cdot) - P^t((n_2, z_2), \cdot)\|_{\mathrm{TV}}\le 2 \mathbb{P}(\btau>t) \le  \frac{2 \mathbb{E}[\btau] }{t}, \quad t>0. 
\end{equation}

{\it Step 3:} Here we obtain the same expression for the coupling time 
$\tau$ as in \eqref{eqn-tau-1}. 

{\it Step 4:} We use random variables $\tau_j$, $\zeta_j$, $\eta_j$ as defined earlier. Instead of estimating the MGF, we get estimates for the mean of the stopping time, and this serves as the backbone of our proof. The required property is already established in Lemma~\ref{lemma:1/t}: $\mathbb E[\tau_0] \le V(n_1\vee n_2)/\overline{C}_V$ for $j = 0$, and similarly with $n_1\vee n_2$ replaced by $1$ for $j \ge 1$: $\mathbb E[\tau_j] \le V(1)/\overline{C}_V$. Combining these estimates, we get:
\begin{equation}
    \label{eq:combined-estimate-mean}
\mathbb E[\tau_j] \le \frac{V(n_1\vee n_2\vee 1)}{\overline{C}_V}\,.
\end{equation}

{\it Step 5:} Recall the stopping time $\mathcal{J}$ in \eqref{eqn-stoppingJ}. Note that it is stochastically dominated by a geometric random variable $\overline{\mathcal{J}}$ with parameter $\overline\vartheta= \mathbb{P}(\zeta_k < \eta_k)$. By  Lemma \ref{lemma:tech}, we get 
\begin{equation}
\label{eq:p-estimate}
\overline\vartheta = \frac{\gamma}{\overline{\lambda}+\gamma}\alpha^{-\overline{\lambda}/\gamma}\,.
\end{equation}
Thus, the expectation of $\overline{\mathcal{J}}$ is given by 
\begin{equation}
\label{eq:est-J}
\mathbb{E}[\overline{\mathcal{J}}] = (1- \overline\vartheta) \overline\vartheta^{-1} \le \overline\vartheta^{-1} =  \frac{\overline{\lambda}+\gamma}{\gamma}\alpha^{\overline{\lambda}/\gamma}\,.
\end{equation}

{\it Step 6:} Continuing Step 4, we get estimates for the mean:
\begin{equation}
\label{eq:exp-estimate}
\mathbb E\left[\zeta_j\wedge\eta_j\right] \le \mathbb E[\eta_j] \le \overline{\lambda}^{-1}.
\end{equation}
Let $\xi_j =\tau_j + \zeta_j\wedge\eta_j$. By Assumption \ref{asmp:polynomial} and \eqref{eq:exp-estimate}, combined 
with~\eqref{eq:combined-estimate-mean}, we have 
\begin{equation}
\label{eq:mean-estimate}
\mathbb{E}[\xi_k] = \mathbb{E}\left[\tau_k+ \zeta_k\wedge\eta_k\right] \le \mathbb E[\tau_j] +  \overline{\lambda}^{-1} \le \overline{C}_\xi := \frac{V(n_1\vee n_2\vee 1)}{\overline{C}_V} + \overline{\lambda}^{-1} <\infty,\quad k \ge 1.
\end{equation} 

{\it Step 7:} We have the following estimates of the expectation of the coupling time. Define the random walk $S_k := \xi_0 + \xi_1 + \ldots + \xi_k$, $k \ge 0$. Then its centered version $M_k = S_k - \mathbb E[S_k]$, $k\ge 0$, is a martingale. From the estimate~\eqref{eq:mean-estimate}, we get that the process
$
\{S_k - k\overline{C}_\xi:\, k \ge 0\},
$
is a supermartingale. Applying the optional stopping theorem with the stopping time $\mathcal{J}$, we  get
$$
\mathbb E[S_{\mathcal{J}}] \le \mathbb E[\mathcal J]\cdot\overline{C}_\xi + \mathbb E[S_0].
$$
Combining this estimate with~\eqref{eq:est-J} and \eqref{eq:mean-estimate}, we get
\begin{align*}
\mathbb{E}[S_{\mathcal{J}}] \le \overline{C}_\xi \cdot \left(1+ \frac{\overline{\lambda}+\gamma}{\gamma}\alpha^{\overline{\lambda}/\gamma} \right). 
\end{align*}
Applying Lindvall's inequality from \cite[Chapter 1]{lindvall} we complete the proof of~\eqref{eq:polynomial-coupled}, and with it the proof of the main result~\eqref{eq:polynomial}. 
\end{proof}

\begin{proof}[Proof of Lemma~\ref{lemma:1/t}] 

We adapt the proof of \cite[Theorem 1]{sarantsev2021sub}. Assume that $\overline{N}(0) = n$. From the condition~\eqref{eq:Lyapunov-1/t}, we get that the process $Y(t) = \overline{C}_V(t\wedge\overline{\tau}_0) + V(\overline{N}(t\wedge\overline{\tau}_0))$ is a nonnegative local supermartingale. By Fatou's lemma, it is a true supermartingale. Then by the optional stopping theorem: $\mathbb E[Y(\overline{\tau}_0)] \le \mathbb E[Y(0)] = V(\overline{N}(0)) = V(n)$. Next, $Y(\overline{\tau}_0) = \overline{C}_V\overline{\tau}_0 + V(\overline{N}(\overline{\tau}_0)) = \overline{C}_V\overline{\tau}_0$. Combining these observations, we complete the proof that 
\begin{equation}
\label{eq:bound-tau}
\mathbb E[\tau_0] \le \frac{V(n)}{\overline{C}_V}\,.
\end{equation} 
Next, $\overline{P}^t(x, y) > 0$ for all $x, y \in \mathbb{N}$ and $t > 0$. Apply the classification adopted in \cite{meyn1993stabilityII} on transient, null recurrent, and positive recurrent processes with counting reference measure. The singleton $\{0\}$ has a positive reference measure. Therefore, the process $\overline{N}$ is positive recurrent. Consequently, $\overline{N}$ has a unique stationary distribution $\overline{\pi}$, and we have convergence $\|\overline{P}^t(n, \cdot) - \overline{\pi}(\cdot)\|_{\mathrm{TV}} \to 0$ for any initial state $n$. Recall the condition $\overline{\mathcal L}V \le -\overline{C}_V$. It holds for all $n \in \mathbb{N}$, except $n = 0$. Using the terminology of \cite{meyn1993stabilityII}, the singleton $\{0\}$ is a small set. Thus, $\mathbb E_{\overline{\pi}}[V] < \infty$. 

\medskip

Consider now two versions $\overline{N}_1$ and $\overline{N}_0$ of the process $\overline{N}$, one starting from the state $n$, and the other from the stationary distribution $\overline{\pi}$ (the stationary version). Both are dominated by the version $\overline{N}_*$ starting from $n\vee \overline{n}^*$ where $\overline{n}^* \sim \overline{\pi}$:
$$
\overline{N}_0(t) \le \overline{N}_*(t)\quad \mbox{and}\quad \overline{N}_1(t) \le \overline{N}_*(t)\quad \mbox{for all}\quad t \ge 0.
$$
Hence, for the hitting time $\overline{\tau}_* := \inf\{t \ge 0\mid \overline{N}_*(t) = 0\}$ we also have $\overline{N}_0(\overline{\tau}_*) = \overline{N}_1(\overline{\tau}_*) = 0$. Couple the processes $\overline{N}_0$ and $\overline{N}_1$ so that $\overline{N}_0(t) = \overline{N}_1(t) = 0$ for $t > \overline{\tau}_*$. By a standard coupling argument, 
\begin{align}
    \label{eq:final-ineq}
    \begin{split}
     \|\overline{P}^t(n, \cdot) - \overline{\pi}(\cdot)\|_{\mathrm{TV}}  & = \sup\left\{\big|\mathbb P\big(\overline{N}_0(t) \in A\big) - \mathbb P\big(\overline{N}_1(t) \in A\big)\big|: A \subseteq \mathbb{N}\right\} \\ & \le 2\mathbb P(\overline{\tau}_* > t) \le \frac{2\mathbb E[\overline{\tau}_*]}{t}\,.
\end{split}
\end{align}
Here we used stationarity of the process $\overline{N}_1$: for every $t \ge 0$, $\overline{N}_1(t) \sim \overline{\pi}$. Combining~\eqref{eq:final-ineq} with~\eqref{eq:bound-tau} yields
\begin{equation}
    \label{eq:final-estimate}
\|\overline{P}^t(n, \cdot) - \overline{\pi}(\cdot)\|_{\mathrm{TV}} \le \frac{2\mathbb E[V(n\vee\overline{n}^*)]}{C_Vt}\,,\quad \overline{n}^* \sim \overline{\pi}.
\end{equation}
Finally, let us estimate $\mathbb E[V(n\vee\overline{n}^*)]$. Since $V$ is nondecreasing, 
$$V(\max(n, N)) = \max(V(n), V(N)) \le V(n) + V(N).$$ 
Taking expectation, we get 
\begin{equation}
    \label{eq:V-estimate-new}
\mathbb E[V(n\vee\overline{n}^*)] \le V(n) + \mathbb E[V(\overline{n}^*)] = V(n) + \mathbb E_{\overline{\pi}}[V].
\end{equation}
Combining~\eqref{eq:final-estimate} with~\eqref{eq:V-estimate-new}, we get the estimate in the statement of Lemma~\ref{lemma:1/t}. 
\end{proof}

\begin{remark}\label{rmk:slowest-general}
It is reasonable to expect that convergence of the joint process $(N,Z)$  will be at the slower of the two rates: the one of the birth-death process in itself and the one of the environment process. 
As we have shown in Theorem \ref{thm:polynomial}, the convergence rate for $X = (N, Z)$ is $1/t$, which is the slower of the rates for the two components. 
 It would be interesting 
to consider more general subexponential convergence rates $\psi(t)$, such as $t^{-\alpha}$ or $\exp\left[-c(\ln t)^{1 - \varepsilon}\right]$
for $\alpha, \varepsilon > 0$. 
For example, the birth-death process with arrival rates $1$ (independent of the position) and service rates $a^{\sqrt{n} - \sqrt{n-1}}$ for a constant $a > 1$ has polynomial rate of convergence $t^{-c}$ for any $c > 0$ (see  \cite[Example 1.9]{ergodic-degrees}).  
Such results would require proving that $\mathbb E[\psi(S_{\mathcal{J}})] < \infty$, which can be 
 harder than proving that $S_{\mathcal{J}}$ has finite mean or finite exponential moments.
It would be interesting to further investigate the polynomial rate of convergence for such models  in future work. 
 \hfill $\Box$
\end{remark}

\section{Appendix}
\label{sec-proofs}

\subsection{Appendix A: Proofs of Theorems  \ref{thm-BD} and \ref{thm-BD-diff}}
\begin{proof}[Proof of Theorem \ref{thm-BD}] 
The irreducibility and aperiodicity properties are straightforward. For the measure $\eta(n,z)$ in \eqref{eqn-BD-eta} to be finite, by Assumption \ref{as-Tn},
\begin{align*}
\sum_{(n,z)} \eta(n,z) = \sum_{(n,z)} r_n(z) v(z)  =\Xi  < \infty.
\end{align*}
To verify that $\eta(n,z)$ in \eqref{eqn-BD-eta} is an invariant measure, we prove that $\eta' \mathbf{R}=0$:  
\begin{align}
\label{eqn-eta-R}
\begin{split}
-\eta(n,z) & R[(n,z), (n,z)] \\ & = \eta(n-1,z) R[(n-1,z),(n,z)] + \eta(n+1,z) R[(n+1,z),(n,z)]   
\\ &  + \sum_{z'\neq z} \eta(n,z') R[(n,z'), (n,z)],  \quad  n=1,2,\dots,\quad  z \in \mathcal{Z};\\
-\eta(0,z) & R[(0,z),(0,z)] \\ & = \eta(1,z) R[(1,z),(0,z)] + \sum_{z'\neq z} \eta(0,z') R[(0,z'), (0,z)\,, n = 0\,, \quad z \in \mathcal Z.  
\end{split}
\end{align}
For~\eqref{eqn-eta-R} with $n \ge 1$, the left-hand side is
\begin{align*}
& \eta(n,z) \sum_{(n',z') \neq (n,z)} R[(n,z),(n',z')]  \\
&= \eta(n,z) \big( R[(n,z),(n+1,z)] + R[(n,z), (n-1,z)] + \sum_{z' \neq z} R[(n,z), (n,z')] \big) \\
&= r_n(z) v(z) \big( \lambda_n(z) + \mu_n(z) + \sum_{z'\neq z} r_n(z)^{-1} \tau_n(z,z') \big)  \\
&= r_n(z)  v(z) \big( \lambda_n(z) + \mu_n(z) \big) +  v(z) \sum_{z'\neq z} \tau_n(z,z'),
\end{align*}
and the right-hand side is equal to 
\begin{align*}
& r_{n-1}(z) v(z)\lambda_{n-1}(z)
+  r_{n+1}v(z)   \mu_{n+1}(z)  + \sum_{z'\neq z} r_n(z') v(z')  r_n(z')^{-1} \tau_n(z',z) \\
& =   v(z)\big( r_{n-1}(z) \lambda_{n-1}(z) +  r_{n+1}  \mu_{n+1}(z)  \big)   +  \sum_{z'\neq z} v(z') \tau_n(z',z). 
\end{align*}
We get equality thanks to the assumption in~\eqref{as-T-v} and the detailed balance equation in \eqref{eqn-N-balance-n}. For~\eqref{eqn-eta-R} with $n = 0$, the left-hand side is 
\begin{align*}
& \eta(0,z) \sum_{z'\neq z} R[(0,z),(0,z')]  = r_0(z) v(z) \big(R[(0,z),(1,z)]  + \sum_{z' \neq z} R[(0,z), (0,z')] \big) \\
& = r_0(z) v(z) \big(\lambda_0(z)   + \sum_{z' \neq z} r_0(z)^{-1} \tau_0(z,z')  \big)  = r_0(z) v(z) \lambda_0(z)   + v(z)  \sum_{z' \neq z} \tau_0(z,z')  \big),
\end{align*}
and the right-hand side is 
\begin{align*}
r_1(z) v(z)  \mu_1(z)  + \sum_{z'\neq z}  r_0(z')  v(z') r_0(z')^{-1} \tau_0(z',z) = r_1(z) v(z)  \mu_1(z)    + \sum_{z'\neq z}  v(z') \tau_0(z',z).
\end{align*}
This again leads to the equality thanks to \eqref{as-T-v} and \eqref{eqn-N-balance-0} for $n=0$. 
Thus we have shown that  $\pi(n,z)$ in \eqref{eqn-BD-pi} is an invariant probability measure. 
The positive recurrence property follows from  \cite[Theorem 3.5.3]{Norris} (see also \cite[Theorem 2.7.18]{SK}). 
The ergodicity property of convergence in total variation follows from  \cite{meyn1992stability}. 
 \end{proof}

\begin{proof}[Proof of Theorem \ref{thm-BD-diff}] 
The proof follows from an analogous argument as that of Theorem 3.1 in \cite{PSBS}, so we only highlight the differences. 
We apply \cite{kurtz2001stationary}, and use their notation as follows: let $E = \NN\times \mathcal{Z}$ and $U=\{0,1,\dots,m\}$, where ``0" indicates $\mathcal{Z}$ and $i=1,\dots,m$ for the faces $F_1,\dots, F_m$ of the boundary, and for $n\in \NN$, $z\in \mathcal{Z}$ and $u \in U$, 
\begin{align*}
\mu_0(\{u\}\times\{n\}\times dz) &= {\bf 1}_{u=0} r_n(z) \nu(dz),\quad 
\mu_1(\{u\}\times\{n\}\times dz) = {\bf 1}_{u\neq 0} r_n(z) \nu_i(dz),\\
\mu_0^E(\{n\}\times dz) &= r_n(z) \nu(dz),\quad \nu_1^E(\{n\}\times dz) = r_n(z) \big(\nu_{F_1}(dz) + \cdots + \nu_{F_m}(dz) \big),\\
\eta_0((n,z), \{u\}) &=  {\bf 1}_{u=0}, \quad 
\eta_1((n,z), \{u\}) =  {\bf 1}_{u\neq 0}, \\
Af((n,z),u)&:= \beta_n^{-1} r_n(z) \mathcal{L} f(n,z),\\
Bf((n,z),u) &:= {\bf 1}_{u\neq 0, z \in \partial D_i, i=1,\dots,m} \gamma_u(z) \cdot \nabla f(z).
\end{align*}
To check \cite[Condition 1.2]{kurtz2001stationary} on the absolutely continuous generator $A$ and the singular generator $B$, we can verify the conditions (i)-(v) in the same way as in the proof of \cite[Theorem 3.1]{PSBS}.  For the main condition in \cite[Theorem 1.7, (1.17)]{kurtz2001stationary}, we need to show that the generators $A$ and $B$ satisfy 
\begin{equation} \label{eqn-conditionKurtz}
\int_{E\times U} Af(x,u) \mu_0(dx\times du) + \int_{E\times U} Bf(x,u) \mu_1(dx\times du) =0.
\end{equation}
By the definitions of $A$ and $B$, we can write the left hand side as 
\begin{align*}
&\sum_{n=0}^\infty\int_{\mathcal{Z}}\beta_n^{-1} r_n(z) \mathcal{M}_zf(n,z) \nu(dz)\\ &  +\sum_{n=0}^\infty \bigg( \int_{\mathcal{Z}} \mathcal{A}f(n,z) \nu(dz) + \sum_{i=1}^m \int_{F_i} \gamma_i(z) \cdot \nabla f(z) \nu_{F_i} (dz)\bigg). 
\end{align*}
The sum of the last two terms is equal to zero, because the basic adjoint relationship holds for the reflected jump diffusion process $\widetilde{Z}$ (see, e.g.,  \cite{williams1995semimartingale}), that is, for $f\in C^2_b(\mathcal{Z})$, 
$$
\int_{\mathcal{Z}} \mathcal{A}f(n,z) \nu(dz) + \sum_{i=1}^m \int_{F_i} \gamma_i(z) \cdot \nabla f(z) \nu_{F_i} (dz) =0.
$$
For each $z\in\mathcal{Z}$, the birth-death process $N(t)$ has the stationary distribution given in \eqref{eqn-pi-n-BD} and \eqref{eqn-pi-0-BD}, which satisfy $r_n(z) \mathcal{M}_zf(n,\cdot) =0$ for each $z \in \mathcal{Z}$ and $n \in \ZZ$. Multiplying this by $\beta_n^{-1}$ and integrating over $z \in \mathcal Z$, we get: 
$$
\sum_{n=0}^\infty \int_{\mathcal{Z}}\beta_n^{-1} r_n(z) \mathcal{M}_zf(n,z) \nu(dz) = 0. 
$$
Thus we have verified that \eqref{eqn-conditionKurtz} holds. The rest of the proof follows the same argument as the proof of \cite[Theorem 3.1]{PSBS}.
\end{proof}

\subsection{Appendix B: A Comparison Lemma} 

\begin{lemma}[Lemma 5.1 in \cite{PSBS}]\label{lemma:tech}
 Fix constants $\alpha > 1$, $\beta, \gamma > 0$. Take two independent random variables $\xi \sim Exp(\beta)$ and $\eta > 0$  with
$\mathbb P(\eta > u) \le \alpha e^{-\gamma u}$ for $u \ge 0$. Then 
\begin{equation}
\label{eq:probab}
\mathbb P(\eta < \xi) \ge \alpha^{-\beta/\gamma}\frac{\gamma}{\beta + \gamma}\,.
\end{equation}
For $a \in [0, \beta + \gamma)$, the moment-generating function for $\xi\wedge\eta$ satisfies
\begin{equation}
\label{eq:MGF-est}
\mathbb E\big[e^{a(\xi\wedge\eta)}\big] \le \theta(\alpha, \beta, \gamma, a),
\end{equation}
where the function $\theta$ is defined in~\eqref{eq:theta}. 
\end{lemma}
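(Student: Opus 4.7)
Define the critical threshold $u_0 := \gamma^{-1}\ln\alpha > 0$, the unique point where the bound $\alpha e^{-\gamma u}$ equals $1$. Combining the hypothesis with the trivial bound $\mathbb{P}(\eta>u)\le 1$ gives $\mathbb{P}(\eta>u)\le \min(1,\alpha e^{-\gamma u})$, and the right-hand side is precisely the survival function of the extremal variable $\eta^* := u_0 + E$ with $E\sim\mathrm{Exp}(\gamma)$. Hence $\eta$ is stochastically dominated by $\eta^*$, and both assertions will reduce to explicit computations with $\xi$ and $\eta^*$.

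\textbf{Part (i).} Conditioning on $\eta$ and using independence gives $\mathbb{P}(\eta<\xi)=\mathbb{E}[e^{-\beta\eta}]$. Since $u\mapsto e^{-\beta u}$ is decreasing, stochastic dominance of $\eta^*$ over $\eta$ yields
\[
\mathbb{P}(\eta<\xi) \;\ge\; \mathbb{E}[e^{-\beta\eta^*}] \;=\; e^{-\beta u_0}\,\mathbb{E}[e^{-\beta E}] \;=\; \alpha^{-\beta/\gamma}\cdot\frac{\gamma}{\beta+\gamma},
\]
which is \eqref{eq:probab}.

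\textbf{Part (ii).} The same dominance, combined with the independence of $\xi$, yields $\xi\wedge\eta \preceq \xi\wedge\eta^*$, so $\mathbb{E}[e^{a(\xi\wedge\eta)}]\le\mathbb{E}[e^{a(\xi\wedge\eta^*)}]$. I split this latter expectation on $\{\xi\le u_0\}$ versus $\{\xi>u_0\}$. On the first event $\xi\le u_0\le \eta^*$, so $\xi\wedge\eta^*=\xi$, contributing $\beta\int_0^{u_0} e^{(a-\beta)u}\,du$. On the second event, the memoryless property of $\xi$ makes $\xi-u_0$ a (conditionally) $\mathrm{Exp}(\beta)$ variable independent of $E$, so $\xi\wedge\eta^*-u_0 = (\xi-u_0)\wedge E \sim\mathrm{Exp}(\beta+\gamma)$, giving a contribution of $e^{(a-\beta)u_0}\cdot(\beta+\gamma)/(\beta+\gamma-a)$. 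Using $e^{(a-\beta)u_0}=\alpha^{-(\beta-a)/\gamma}$ and summing yields
\[
\mathbb{E}[e^{a(\xi\wedge\eta^*)}] \;=\; \frac{\beta}{\beta-a} + \alpha^{-(\beta-a)/\gamma}\left(\frac{\beta+\gamma}{\beta+\gamma-a}-\frac{\beta}{\beta-a}\right),
\]
and the parenthesized difference simplifies algebraically to $-a\gamma/[(\beta-a)(\beta+\gamma-a)]$, producing exactly $\theta(\alpha,\beta,\gamma,a)$.

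\textbf{Main obstacle.} None of the steps is deep; the only care required is tracking the algebraic consolidation of the two boundary contributions into the single $\theta$-term. Convenient sanity checks are $\theta(\alpha,\beta,\gamma,0)=1$ and the formal limit $\alpha\downarrow 1$ (so $u_0\downarrow 0$), under which $\theta$ collapses to $(\beta+\gamma)/(\beta+\gamma-a)$, the MGF of $\mathrm{Exp}(\beta+\gamma)$, matching what the bound $\mathbb{P}(\eta>u)\le e^{-\gamma u}$ would give if $\eta$ itself were exponential with rate $\gamma$.
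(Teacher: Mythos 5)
Your proof is correct, and in fact you compute $\mathbb{E}[e^{a(\xi\wedge\eta^*)}]$ exactly rather than merely bounding it. Note, though, that the paper you're reading does not reproduce the proof of this lemma: it is stated in Appendix~B with a citation to Lemma~5.1 of \cite{PSBS}, so there is no in-text proof to compare against. Your strategy — identify the extremal tail envelope $\mathbb{P}(\eta^*>u)=\min(1,\alpha e^{-\gamma u})$, realize it as $\eta^*=u_0+E$ with $E\sim\mathrm{Exp}(\gamma)$, invoke stochastic dominance for the decreasing function $e^{-\beta u}$ (part i) and the increasing function $e^{au}$ of $\xi\wedge(\cdot)$ (part ii), then split at $u_0$ and use memorylessness — is clean, and all the steps check out: $\mathbb{P}(\eta<\xi)=\mathbb{E}[e^{-\beta\eta}]\ge e^{-\beta u_0}\gamma/(\beta+\gamma)=\alpha^{-\beta/\gamma}\gamma/(\beta+\gamma)$, and the two pieces $\tfrac{\beta}{\beta-a}\bigl(1-\alpha^{-(\beta-a)/\gamma}\bigr)$ and $\alpha^{-(\beta-a)/\gamma}\tfrac{\beta+\gamma}{\beta+\gamma-a}$ do consolidate, since $\tfrac{\beta+\gamma}{\beta+\gamma-a}-\tfrac{\beta}{\beta-a}=\tfrac{-a\gamma}{(\beta-a)(\beta+\gamma-a)}$, into exactly $\theta(\alpha,\beta,\gamma,a)$ as defined in~\eqref{eq:theta}. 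One cosmetic remark: your formula for $\theta$ has an apparent singularity at $a=\beta$, which the statement's range $a\in[0,\beta+\gamma)$ includes; since your derivation actually computes a finite quantity there (the first piece becomes $\beta u_0$), the inequality holds with $\theta$ interpreted by continuity at $a=\beta$, and it would be worth saying so explicitly. Your sanity checks ($\theta=1$ at $a=0$; degeneration to the $\mathrm{Exp}(\beta+\gamma)$ MGF as $\alpha\downarrow1$) are apt.
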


\section*{Acknowledgments}
G. Pang was supported in part by NSF grant DMS-2216765. 
A. Sarantsev thanks Department of Mathematics \& Statistics at the University of Nevada, Reno, for the welcoming atmosphere 
for research. 
Y. Suhov thanks Department of Mathematics at the Pennsylvania State University for hospitality and support. Y. Suhov thanks
IHES, Bures-sur-Yvette, whose stimulating environment provides a constant source of inspiration.

\bibliographystyle{abbrv}
\bibliography{BD-IRE}

\end{document}